\def\qed{\hfill $\Box$}
\newtheorem{theorem}{Theorem}	
\newtheorem{lemma}{Lemma}[section]		
\newtheorem{corollary}{Corollary}	
\newtheorem{proposition}{Proposition}			
\newtheorem{definition}{Definition}
\newtheorem{example}{Example}[section]
\newtheorem{remark}{Remark}[section]	
\newfont{\bg}{cmr9 scaled\magstep4}
\newcommand{\bigzerol}{\smash{\lower1.0ex\hbox{\bg 0}}}
\title{
Recognizable classification of Lorentzian distance-squared mappings
\\
}
\author{Shunsuke Ichiki}
\address{Graduate School of Environment and Information Sciences,  
Yokohama National University, 
Yokohama 240-8501, Japan}
\email{ichiki-shunsuke-jb@ynu.ac.jp}
\author{Takashi Nishimura}
\address{Research Group of Mathematical Sciences,  
Research Institute of Environment and Information Sciences,  
Yokohama National University, 
Yokohama 240-8501, Japan}
\email{nishimura-takashi-yx@ynu.jp}
\thanks{T.~Nishimura was partially supported 
by JSPS and CAPES under the Japan--Brazil research cooperative 
program} 
\begin{document}
\date{}
\begin{abstract}
The Lorentzian length, 
which is one of the most significant functions in Lorentzian geometry, 
is a complex-valued function. 
Its square gives a real-valued non-degenerate quadratic function.   
In this paper, we define naturally extended mappings of 
Lorentzian distance-squared functions, wherein each component 
is a Lorentzian distance-squared function; and classify these mappings completely 
by the likeness of recognition subspaces.     
\end{abstract}
\subjclass{57R45, 58C25, 58K50} 
\keywords{Lorentzian distance-squared mapping, 
distance-squared mapping, 
recognition subspace, 
definite fold mapping, 
Lorentzian indefinite fold mapping} 
\maketitle
\noindent
\section{Introduction}\label{Introduction}
Let $n$ be a positive integer.   
For the $(n+1)$-dimensional vector space $\mathbb{R}^{n+1}$,  
the following quadratic form is called the {\it Lorentzian inner product}: 
\[ \langle x, y\rangle =-x_0y_0+x_1 y_1+\cdots +x_n y_n,\]
where $x=(x_0, x_1, \ldots, x_n), y=(y_0, y_1, \ldots, y_n)$ 
are elements of $\mathbb{R}^{n+1}$.         
The $(n+1)$-dimensional vector space $\mathbb{R}^{n+1}$ is called  
{\it Lorentzian $(n+1)$-space} and is denoted by $\mathbb{R}^{1,n}$ if 
the role of the Euclidean inner product $x\cdot y=\sum_{i=0}^nx_iy_i$ is replaced by 
the Lorentzian inner product.    
For a vector $x$ of 
Lorentzian $(n+1)$-space $\mathbb{R}^{1,n}$, $\sqrt{\langle x,x\rangle}$  is called  
the {\it Lorentzian length} of $x$.   
Note that 
the Lorentzian length 
may take a pure imaginary value and thus it does not give a real-valued function.   
On the other hand, 
its square gives  a real-valued 
quadratic function 
$x\mapsto \langle x, x\rangle$. 
A non-zero vector $x\in \mathbb{R}^{1,n}$ is said to be 
{\it space-like}, {\it light-like} or {\it time-like} if 
its Lorentzian length 
is positive, zero or pure imaginary respectively. 
The likeness of the vector subspace is defined as following (see Figure 1).       
\begin{definition}[\cite{ratcliffe}]\label{likeness}
{\rm 
Let $V$ be a vector subspace of $\mathbb{R}^{1,n}$.   
Then $V$ is said to be 
\begin{enumerate}
\item {\it time-like} if and only if $V$ has a time-like vector, 
\item {\it space-like} if and only if every nonzero vector in $V$ is space-like, or 
\item {\it light-like} otherwise.    
\end{enumerate}
}
\end{definition}
\begin{figure}
\begin{center}
\includegraphics[width=6cm,clip]{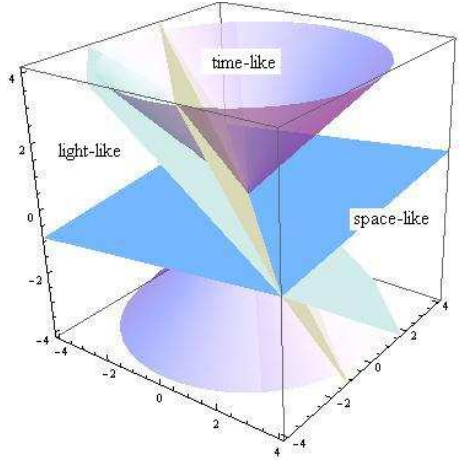}
\caption{Figure of Definition \ref{likeness}}
\end{center}
\end{figure}
\noindent 
The {\it light cone} of Lorentzian $(n+1)$-space $\mathbb{R}^{1,n}$, 
denoted by $LC$, is the set of  $x\in \mathbb{R}^{1,n}$ such that 
$\langle x, x\rangle =0$.   
For more details on Lorentzian space, 
refer to \cite{ratcliffe}.    
Recently, geometry of submanifolds in Lorentzian space has been actively studied 
from the viewpoint of Singularity Theory 
(for instance, see \cite{izumiyakossowskipeicarmen,izumiyanunocarmen,izumiyapeisano,
izumiyasaji,izumiyasato,izumiyatakahashitari,izumiyatari1,izumiyatari2,izumiyahandan,kasedou,sato,tari1,tari2}).     
In this paper, 
we give a different application of Singularity Theory to the study of Lorentzian space 
from these researches.
\par 
For any point $p$ of $\mathbb{R}^{1,n}$, the following function  
$\ell_p^2: \mathbb{R}^{1,n}\to \mathbb{R}$ is called 
the {\it Lorentzian distance-squared function}(\cite{izumiyakossowskipeicarmen}):   
\[
\ell_p^2(x)=\langle x-p, x-p\rangle .   
\]
For finitely many points $p_0, \ldots, p_{k}\in \mathbb{R}^{1,n}$ $(1\le k)$, 
the {\it Lorentzian distance-squared mapping}, denoted by  
$L_{(p_0, \ldots, p_k)}: \mathbb{R}^{1,n}\to \mathbb{R}^{k+1}$, is defined as follows:   
\[
L_{(p_0, \ldots, p_k)}(x)=\left(\ell_{p_0}^2(x), \ldots, \ell_{p_k}^2(x)\right).   
\]
Define the vector subspace 
$V(p_0, \ldots, p_k)$ of $\mathbb{R}^{1,n}$, called the {\it recognition subspace}, 
by  
\[
V(p_0, \ldots, p_k)= 
\sum_{i=1}^k \mathbb{R}\;\overrightarrow{p_0 p_i}.    
\]
The $(k+1)$ points 
$p_0, \ldots, p_k$ are said to be 
{\it in general position} if 
the dimension of $V(p_0, \ldots, p_k)$ is $k$.   
For $(k+1)$ points $q_0, \ldots, q_k\in \mathbb{R}^{1,n}$ in general position $(k\le n)$, 
the singular set of $L_{(q_0, \ldots, q_k)}: \mathbb{R}^{1,n}\to \mathbb{R}^{k+1}$ is the $k$-dimensional 
affine subspace spanned by these points.    
\par 
Two mappings $f, g: \mathbb{R}^{1,n}\to \mathbb{R}^{k+1}$ are said to be 
{\it $\mathcal{A}$-equivalent} if there exist $C^\infty$ diffeomorphisms 
$h: \mathbb{R}^{1,n}\to \mathbb{R}^{1,n}$ and 
$H: \mathbb{R}^{k+1}\to \mathbb{R}^{k+1}$ such that $g=H\circ f\circ h$.     
For any two positive integers $k, n$ satisfying $k \le n$, 
the {\it normal form of definite fold mapping} is the following 
$\Phi_k: \mathbb{R}^{1,n}\to \mathbb{R}^{k+1}$: 
\[
\Phi_k\left(x_0, x_1, \ldots, x_n\right)=
\left(x_1, \ldots, x_k, x_0^2+\sum_{i=k+1}^{n}x_i^2\right).
\]     
For any two positive integers $k, n$ satisfying $k < n$, 
the {\it normal form of Lorentzian indefinite fold mapping} is the following 
$\Psi_k: \mathbb{R}^{1,n}\to \mathbb{R}^{k+1}$: 
\[
\Psi_k\left(x_0, x_1, \ldots, x_n\right)=
\left(x_1, \ldots, x_k, 
-x_0^2+\sum_{i=k+1}^{n}x_i^2\right).
\]   
\begin{theorem}\label{theorem 1}
\begin{enumerate}
\item 
Let $k, n$ be two positive integers satisfying $k< n$ 
and let $p_0, \ldots, p_k$ belonging to 
$\mathbb{R}^{1,n}$ be $(k+1)$ points in general position. 
Then, the following hold:   
\begin{enumerate}
\item The Lorentzian distance-squared mapping 
$L_{(p_0, \ldots, p_k)}: \mathbb{R}^{1,n}\to \mathbb{R}^{k+1}$ is 
$\mathcal{A}$-equivalent to the normal form of 
definite fold mapping $\Phi_k$ if and only if 
the recognition subspace $V(p_0, \ldots, p_k)$ is time-like.   
\item The Lorentzian distance-squared mapping 
$L_{(p_0, \ldots, p_k)}:\mathbb{R}^{1,n}\to \mathbb{R}^{k+1}$ is 
$\mathcal{A}$-equivalent to the normal form of 
Lorentzian indefinite fold mapping $\Psi_k$ if and only if 
the recognition subspace 
$V(p_0, \ldots, p_k)$ is space-like. 
\item The Lorentzian distance-squared mapping 
$L_{(p_0, \ldots, p_k)}:\mathbb{R}^{1,n}\to \mathbb{R}^{k+1}$ is 
$\mathcal{A}$-equivalent to the mapping 
\[
(x_0, \ldots, x_n)\mapsto \left( x_1, \ldots, x_k, x_0x_1
+\sum_{i=k+1}^n x_i^2\right)
\]
if and only if 
the recognition subspace 
$V(p_0, \ldots, p_k)$ is light-like. 
\end{enumerate}
\item 
Let $n$ be a positive integer and 
let $p_0, \ldots, p_n\in \mathbb{R}^{1,n}$ be $(n+1)$ points in general position. 
Then, the following hold:   
\begin{enumerate}
\item The Lorentzian distance-squared mapping 
$L_{(p_0, \ldots, p_n)}:\mathbb{R}^{1,n}\to \mathbb{R}^{n+1}$ is 
$\mathcal{A}$-equivalent to the normal form of 
definite fold mapping $\Phi_n$ if and only if 
the recognition subspace 
$V(p_0, \ldots, p_n)$ is time-like or space-like. 

\item The Lorentzian distance-squared mapping 
$L_{(p_0, \ldots, p_n)}: \mathbb{R}^{1,n}\to \mathbb{R}^{n+1}$ is 
$\mathcal{A}$-equivalent to the mapping  
$(x_0, \ldots, x_n)\mapsto (x_1, \ldots, x_n, x_0x_1)$ 
if and only if 
the recognition subspace 
$V(p_0, \ldots, p_n)$ is light-like.   
\end{enumerate}
\item 
Let $k, n$ be two positive integers satisfying $n<k$ and 
let $p_0, \ldots, p_{k}\in \mathbb{R}^{1,n}$ be $(k+1)$ points such that 
the $(n+2)$ points $p_0, \ldots, p_{n+1}$ are in general position. 
Then, the Lorentzian distance-squared mapping 
$L_{(p_0, \ldots, p_k)}: \mathbb{R}^{1,n}\to \mathbb{R}^{k+1}$ is always 
$\mathcal{A}$-equivalent to the inclusion 
$(x_0, \ldots, x_n)\mapsto (x_0, \ldots, x_n, 0, \ldots, 0)$.    
\end{enumerate}
\end{theorem}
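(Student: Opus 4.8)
The plan is to reduce all three parts to a single normal form and then read off the answer from the restriction of the Lorentzian inner product to the recognition subspace. Writing $Q(x)=\langle x,x\rangle$, each component satisfies $\ell_{p_i}^2(x)=Q(x)-2\langle x,p_i\rangle+\langle p_i,p_i\rangle$, so the common quadratic term $Q$ cancels in the differences $\ell_{p_i}^2-\ell_{p_0}^2=-2\langle x,\overrightarrow{p_0p_i}\rangle+b_i$, which are affine. Subtracting the first component from the others is a linear automorphism of the target, and translating the source by $p_0$ turns $\ell_{p_0}^2$ into $Q$; hence, after these $\mathcal{A}$-moves and an affine normalization of the target, $L_{(p_0,\ldots,p_k)}$ is $\mathcal{A}$-equivalent to
\[
F(x)=\left(\langle x,v_1\rangle,\ldots,\langle x,v_k\rangle,\langle x,x\rangle\right),
\]
where $v_1,\ldots,v_k$ is a basis of $V=V(p_0,\ldots,p_k)$ (linearly independent by general position; the functionals $\langle\cdot,v_i\rangle$ are independent because $\langle\cdot,\cdot\rangle$ is non-degenerate). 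Everything now depends only on $\langle\cdot,\cdot\rangle|_V$, whose type is exactly the likeness of $V$: positive definite (space-like), non-degenerate of index one (time-like), or degenerate with one-dimensional radical (light-like).

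Next I would choose a linear source coordinate system adapted to $V$. If $V$ is time-like take a Lorentzian-orthonormal basis whose first $k$ vectors span $V$ (signature $(1,k-1)$) and whose remaining $n+1-k$ vectors span the positive-definite $V^\perp$; if $V$ is space-like take the first $k$ basis vectors positive definite in $V$ and put the single time-like direction in $V^\perp$; if $V$ is light-like use a Witt (null) basis $u,u^*,w_1,\ldots,w_{n-1}$ with $u$ spanning the radical of $V$, $\langle u,u^*\rangle=1$, $\langle u,u\rangle=\langle u^*,u^*\rangle=0$, and $V=\mathrm{span}(u,w_1,\ldots,w_{k-1})$. In each case the functionals $\langle\cdot,v_i\rangle$ span the same space as the coordinate functionals dual to $V$, so a target linear map replaces the first $k$ entries of $F$ by those coordinates, while $\langle x,x\rangle$ becomes $-x_0^2+\sum x_i^2$ (time-like/space-like) or $2st+\sum y_i^2$ (light-like). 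Subtracting from the last entry the squares of the variables that already appear as linear outputs (a target diffeomorphism of the form $z_{k+1}\mapsto z_{k+1}-\phi(z_1,\ldots,z_k)$) and relabeling then produces exactly $\Phi_k$, $\Psi_k$, or $x_0x_1+\sum_{i=k+1}^n x_i^2$ respectively. For part (2) the fibre quadratic has a single free variable, so both the time-like and space-like computations give $\pm x_0^2$, and $-x_0^2\sim x_0^2$ via a sign flip of the target; this is why time-like and space-like collapse to $\Phi_n$, while the light-like form degenerates to $x_0x_1$.

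These forward implications, together with the trichotomy of likeness, reduce each ``if and only if'' to showing that $\Phi_k$, $\Psi_k$ and the light-like form are pairwise non-$\mathcal{A}$-equivalent, which I expect to be the main obstacle. The two folds $\Phi_k$ and $\Psi_k$ are separated by the index of the non-degenerate fibre quadratic (an $\mathcal{A}$-invariant of fold germs, well defined modulo the sign change of the last target coordinate): $\Phi_k$ has absolute index $0$ and $\Psi_k$ has absolute index $1$ when $k<n$. The light-like form is distinguished from both by showing it is not a fold at all: along its singular set $\{x_1=0,\ x_{k+1}=\cdots=x_n=0\}$ the intrinsic second derivative (the Hessian of the cokernel component restricted to $\ker DF$) is degenerate, of rank $n-k$, because the cross term $x_0x_1$ contributes no square in the kernel direction $x_0$, whereas both folds have a non-degenerate such Hessian of rank $n-k+1$. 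Since this rank is an $\mathcal{A}$-invariant, the three forms are mutually inequivalent, and the logical argument (if $L\sim\Phi_k$ but $V$ were space-like or light-like, then two normal forms would be equivalent, a contradiction) yields every equivalence in parts (1) and (2).

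Finally, for part (3) the hypothesis that $p_0,\ldots,p_{n+1}$ are in general position forces $V(p_0,\ldots,p_{n+1})=\mathbb{R}^{1,n}$, so the $n+1$ affine functions $\ell_{p_i}^2-\ell_{p_0}^2$ $(1\le i\le n+1)$ form an affine coordinate system $y=\Lambda(x)$ on $\mathbb{R}^{1,n}$. Using $y$ as new source coordinates and reordering the target so these $n+1$ functions come first, every other component---namely $\ell_{p_0}^2$ and $\ell_{p_i}^2-\ell_{p_0}^2$ for $i\ge n+2$---becomes a smooth function of $y$, so $L_{(p_0,\ldots,p_k)}$ is $\mathcal{A}$-equivalent to a graph map $y\mapsto(y,\Psi(y))$ with $\Psi\colon\mathbb{R}^{n+1}\to\mathbb{R}^{k-n}$. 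The shear target diffeomorphism $(u,w)\mapsto(u,w-\Psi(u))$ straightens this graph to $(y,0)$, i.e.\ to the asserted inclusion, completing the proof.
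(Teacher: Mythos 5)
Your proposal is correct, and after the common first move it takes a genuinely different route from the paper. The opening reduction is the same in substance: the cancellation of $\langle x,x\rangle$ in the differences $\ell_{p_i}^2-\ell_{p_0}^2$ and the translation by $p_0$ are exactly the paper's Steps 1--2 (the target shear $H_1$ and source affine map $H_2$), producing $\bigl(\langle x,v_1\rangle,\ldots,\langle x,v_k\rangle,\langle x,x\rangle\bigr)$ up to affine moves. From there the paper works with explicit matrices: it Euclidean-orthonormalizes the spatial block $\widetilde{A}B$, must therefore split into the cases $V\cap T=\{0\}$ and $V\cap T=T$ (with $T$ the time axis), detects the likeness of $V$ through Lemmas 2--4 (the criterion $\sum_i\alpha_{0,i}^2\gtrless 1$, proved by a Euclidean distance-to-the-light-cone computation), and finishes by completing the square ($H_5$ through $H_8'$). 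You instead choose a basis adapted to the Lorentzian form itself --- an orthogonal decomposition $V\oplus V^\perp$ in the two nondegenerate cases and a Witt (null) basis with $\langle u,u^*\rangle=1$ in the light-like case --- which treats all cases uniformly, eliminates the case split and any analogue of Lemmas 2--4, and makes the appearance of $x_0x_1$ transparent as the hyperbolic pair $2st$; your check that the radical of $\langle\cdot,\cdot\rangle|_V$ is one-dimensional in the light-like case is the point that makes this legitimate. You also go beyond the paper on the ``only if'' half: where the paper merely asserts that $\Phi_k$, $\Psi_k$ and the light-like form are pairwise non-$\mathcal{A}$-equivalent as ``easily seen,'' you exhibit the invariants (rank and absolute index of the Hessian of the cokernel component restricted to $\ker DF$ along the singular set), and these correctly predict the collapse $\Phi_n\sim\Psi_n$ in part (2), since there the kernel is one-dimensional and the absolute index $\min(1,0)$ vanishes. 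Part (3) is the same argument in both proofs (full-rank linear part yields a graph map, straightened by a target shear), only phrased invariantly rather than via the block form of $AB$. Finally, since all your source changes are linear or affine, the paper's Remark 3.1(2) --- that the diffeomorphism $h$ in Corollary 1 can be taken affine --- survives under your proof as well.
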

\begin{example}\label{example 1.1}
{\rm Let $f, g, h : \mathbb{R}^{3}\to \mathbb{R}^2$ be polynomial mappings 
defined as follows:    
\begin{eqnarray*}
f(x, y, z) & = & \left(-x^2+y^2+z^2, -(x-1)^2+(y-1)^2+z^2\right),  \\  
g(x, y, z) & = & \left(-x^2+y^2+z^2, -(x-1)^2+(y-2)^2+z^2\right), \\ 
h(x, y, z) & = & \left(-x^2+y^2+z^2, -(x-2)^2+(y-1)^2+z^2\right).  
\end{eqnarray*} 
Since $\langle (1,1,0), (1,1,0)\rangle =0$, $\langle (1,2,0), (1,2,0)\rangle >0$ and 
$\langle (2,1,0), (2,1,0)\rangle <0$,  
the mapping $f$ is $\mathcal{A}$-equivalent to the mapping 
$(x, y, z)\mapsto (y, xy+z^2)$, the mapping $g$ is $\mathcal{A}$-equivalent to 
the normal form of Lorentzian indefinite fold mapping $\Psi_1$ 
and the mapping $h$ is $\mathcal{A}$-equivalent to 
the normal form of definite fold mapping $\Phi_1$ 
respectively.}
\end{example}

Any non-singular fiber of $\Phi_{n-1}$ is a circle.     
Any non-singular fiber of $\Psi_{n-1}$ is an equilateral hyperbola.   
Any non-singular fiber of $(x_0, \ldots, x_n)\mapsto (x_1, \ldots, x_{n-1}, x_0x_1+x_n^2)$ is a parabola (possibly at infinity).    
Therefore, as a corollary of Theorem \ref{theorem 1}, we have the following:   
\begin{corollary}\label{corollary 1}
Let $n$ be a positive integer such that $\; 2\le n$ and 
let $p_0, \ldots, p_{n-1}$ belonging to 
$\mathbb{R}^{1,n}$ be $n$ points in general position. 
Then, the following hold:   
\begin{enumerate}
\item There exists a $C^\infty$ diffeomorphism $h:\mathbb{R}^{1,n}\to \mathbb{R}^{1,n}$ 
by which any non-singular fiber $L_{(p_0, \ldots, p_{n-1})}^{-1}(y)$ is mapped to a circle 
if and only if 
the recognition subspace $V(p_0, \ldots, p_{n-1})$ is time-like.   
\item There exists a $C^\infty$ diffeomorphism $h:\mathbb{R}^{1,n}\to \mathbb{R}^{1,n}$ 
by which any non-singular fiber $L_{(p_0, \ldots, p_{n-1})}^{-1}(y)$ is mapped to 
an equilateral hyperbola 
if and only if 
the recognition subspace $V(p_0, \ldots, p_{n-1})$ is space-like.  
\item There exists a $C^\infty$ diffeomorphism $h:\mathbb{R}^{1,n}\to \mathbb{R}^{1,n}$ 
by which any non-singular fiber $L_{(p_0, \ldots, p_{n-1})}^{-1}(y)$ is mapped to a parabola 
if and only if 
the recognition subspace $V(p_0, \ldots, p_{n-1})$ is light-like.  
\end{enumerate}
\end{corollary}
\noindent 
It turns out that an affine diffeomorphism can be chosen 
as the diffeomorphism $h:\mathbb{R}^{1,n}\to \mathbb{R}^{1,n}$ in Corollary \ref{corollary 1} 
(see Remark \ref{remark 3.1}).   
\par 
\smallskip 
Our motivation to classify Lorentzian distance-squared mappings is the 
following Proposition \ref{proposition 1}. 
It is natural to ask how Proposition \ref{proposition 1} changes 
if distance-squared functions are replaced with Lorentzian 
distance-squared functions.
For any point $p$ of $\mathbb{R}^{n+1}$, the following function  
$d_p^2: \mathbb{R}^{n+1}\to \mathbb{R}$ is called 
the {\it distance-squared function}(\cite{brucegiblin}):   
\[
d_p^2(x)=(x-p)\cdot (x-p),    
\]
where the dot in the center stands for the Euclidean inner product of two vectors.   
For any finitely many points $p_0, \ldots, p_{k}\in \mathbb{R}^{n+1}$ $(1\le k)$, 
the following mapping $D_{(p_0, \ldots, p_k)}: \mathbb{R}^{n+1}\to \mathbb{R}^{k+1}$ has been introduced in \cite{ichikinishimura} 
and is called the {\it distance-squared mapping}:    
\[
D_{(p_0, \ldots, p_k)}(x)=\left(
d_{p_0}^2(x), \ldots, d_{p_k}^2(x)\right).   
\]
The following characterization has been known for distance-squared mappings.  
\begin{proposition}[\cite{ichikinishimura}]\label{proposition 1}
\begin{enumerate}
\item Let $k, n$ be two positive integers satisfying $k \le n$ and 
let $p_0, \ldots, p_k\in \mathbb{R}^{1,n}$ be $(k+1)$ points in general position.   
Then, the distance-squared mapping $D_{(p_0, \ldots, p_k)}$ is $\mathcal{A}$-equivalent 
to the normal form of definite fold mapping $\Phi_k$.   
\item 
Let $k, n$ be two positive integers satisfying $n<k$ and 
let $p_0, \ldots, p_{k}\in \mathbb{R}^{1,n}$ be $(k+1)$ points such that 
the $(n+2)$ points $p_0, \ldots, p_{n+1}$ are in general position.    
Then, the distance-squared mapping $D_{(p_0, \ldots, p_k)}$ is $\mathcal{A}$-equivalent 
to the inclusion $(x_0,\ldots,x_n)\mapsto (x_0,\ldots,x_n,0,\ldots,0)$.   
\end{enumerate}
\end{proposition}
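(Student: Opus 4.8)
The plan is to exploit the elementary fact that, although each component $d_{p_i}^2$ is quadratic, the \emph{differences} of two components are affine. Expanding the Euclidean square gives
\[
d_{p_i}^2(x)-d_{p_0}^2(x)=-2\,x\cdot\overrightarrow{p_0p_i}+\bigl(\|p_i\|^2-\|p_0\|^2\bigr),
\]
whose linear part is $-2\,x\cdot\overrightarrow{p_0p_i}$. Since $p_0,\ldots,p_k$ are in general position, the vectors $\overrightarrow{p_0p_1},\ldots,\overrightarrow{p_0p_k}$ are linearly independent, so these affine functions have linearly independent differentials. First I would post-compose $D_{(p_0,\ldots,p_k)}$ with the linear isomorphism $H_1(X_0,\ldots,X_k)=(X_0,X_1-X_0,\ldots,X_k-X_0)$ of $\mathbb{R}^{k+1}$, turning the mapping into $\bigl(d_{p_0}^2,\,d_{p_1}^2-d_{p_0}^2,\ldots,d_{p_k}^2-d_{p_0}^2\bigr)$, a single quadratic component followed by affine components. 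All remaining simplifications are $\mathcal{A}$-moves: affine (or shear polynomial) diffeomorphisms on the target and affine diffeomorphisms on the source, which for this Euclidean problem may be taken to ignore the Lorentzian structure of the domain.

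For part (1) ($k\le n$), I would translate the source by $p_0$, so that in the coordinate $u=x-p_0$ the quadratic component is $\|u\|^2$ and, after clearing constants by an affine post-composition, the $i$-th affine component is $u\cdot\overrightarrow{p_0p_i}$. Let $U=\mathrm{span}(\overrightarrow{p_0p_1},\ldots,\overrightarrow{p_0p_k})$, a $k$-dimensional subspace, and split $\mathbb{R}^{n+1}=U\oplus U^{\perp}$ orthogonally with orthonormal coordinates $y_1,\ldots,y_k$ on $U$ and $z_1,\ldots,z_{n+1-k}$ on $U^{\perp}$. Since the Gram matrix of the $\overrightarrow{p_0p_i}$ is invertible, the affine components form an invertible linear image of $(y_1,\ldots,y_k)$, while $\|u\|^2=\sum_i y_i^2+\sum_j z_j^2$. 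A linear post-composition normalizes the affine components to $y_1,\ldots,y_k$, and then the polynomial diffeomorphism $(Y_1,\ldots,Y_k,W)\mapsto\bigl(Y_1,\ldots,Y_k,\,W-\sum_{i=1}^k Y_i^2\bigr)$ of the target (whose inverse adds $\sum Y_i^2$ back) replaces the quadratic component by $\sum_j z_j^2$. Relabelling the $n+1-k$ coordinates $z_j$ as $x_0,x_{k+1},\ldots,x_n$ yields exactly $\Phi_k$. The positive definiteness of $\|u\|^2$ is what forces the definite fold; this is precisely the feature the Lorentzian analogue destroys.

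For part (2) ($n<k$), the hypothesis that $p_0,\ldots,p_{n+1}$ are in general position means $\overrightarrow{p_0p_1},\ldots,\overrightarrow{p_0p_{n+1}}$ form a basis of $\mathbb{R}^{n+1}$, so the affine map $x\mapsto(d_{p_1}^2-d_{p_0}^2,\ldots,d_{p_{n+1}}^2-d_{p_0}^2)$ is an affine diffeomorphism of $\mathbb{R}^{n+1}$. I would take its inverse as the source diffeomorphism, so that these $n+1$ components become coordinate functions $w_0,\ldots,w_n$. In these coordinates every remaining component is a polynomial in $w_0,\ldots,w_n$: the surviving $d_{p_0}^2$ becomes a quadratic $Q(w)$, and each extra difference $d_{p_j}^2-d_{p_0}^2$ ($j\ge n+2$) becomes an affine function of $w$. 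Since these offending components depend only on coordinates already present among the outputs, a single polynomial diffeomorphism of $\mathbb{R}^{k+1}$ that subtracts $Q$ and the extra affine functions from the appropriate components (leaving the $w$-components untouched, hence invertible with polynomial inverse) annihilates all of them; a final permutation of target coordinates produces the inclusion $(x_0,\ldots,x_n)\mapsto(x_0,\ldots,x_n,0,\ldots,0)$.

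I expect the genuinely substantive step to be the recognition that differences linearize the problem, so that at most one quadratic component ever survives; everything afterwards is organized bookkeeping. The main care needed is to verify that each target move is a \emph{global} diffeomorphism of $\mathbb{R}^{k+1}$ (the subtract-a-polynomial maps qualify because the subtracted quantity depends only on components left unchanged) and to keep the index ranges of $\Phi_k$ consistent — in particular checking that the $n+1-k$ leftover square terms in part (1) match the $1+(n-k)$ terms of $x_0^2+\sum_{i=k+1}^n x_i^2$. No hard analysis or singularity-theoretic machinery is required; the whole argument is linear algebra together with two explicit polynomial coordinate changes on the target.
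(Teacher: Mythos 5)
Your proposal is correct and follows essentially the same route as the paper's: post-composing with the affine map taking differences of components (the paper's $H_1$), translating the source by $p_0$ (the paper's $H_2$), orthonormalizing the difference vectors via linear changes on source and target (the paper's $B$, $H_3$, $C$, $H_4$), and finally a polynomial shear of the target subtracting $\sum Y_i^2$ (respectively the quadratic $Q$ and extra affine functions in the case $n<k$), exactly as in the paper's Step 4 and its proof of part (3) of Theorem \ref{theorem 1}. The only differences are cosmetic bookkeeping, e.g.\ in part (2) you normalize the first $n+1$ difference components by a source affine diffeomorphism where the paper uses a regular matrix $B$ acting on the target, which is interchangeable.
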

\noindent 
Combining Theorem \ref{theorem 1} and Proposition \ref{proposition 1} yields 
the following:  
\begin{corollary}\label{corollary 2}
\begin{enumerate}
\item 
Let $k, n$ be two positive integers satisfying $ k< n$ and 
let $p_0, \ldots, p_k$ belonging to $\mathbb{R}^{1,n}$ be 
$(k+1)$ points in general position. 
Then, the Lorentzian distance-squared mapping $L_{(p_0, \ldots, p_k)}$ 
is $\mathcal{A}$-equivalent to the distance-squared mapping 
$D_{(p_0, \ldots, p_k)}$ 
if and only if 
the recognition subspace 
$V(p_0, \ldots, p_k)$ is time-like. 
\item 
Let $n$ be a positive integer and 
let $p_0, \ldots, p_n\in \mathbb{R}^{1,n}$ be $(n+1)$ points in general position. 
Then, the Lorentzian distance-squared mapping $L_{(p_0, \ldots, p_n)}$ 
is $\mathcal{A}$-equivalent to the distance-squared mapping 
$D_{(p_0, \ldots, p_n)}$ 
if and only if 
the recognition subspace 
$V(p_0, \ldots, p_n)$ is time-like or space-like. 
\item 
Let $k, n$ be two positive integers satisfying $n<k$ and 
let $p_0, \ldots, p_{k}\in \mathbb{R}^{1,n}$ be $(k+1)$ points such that 
the $(n+2)$ points $p_0, \ldots, p_{n+1}$ are in general position.    
Then, 
the Lorentzian distance-squared mapping $L_{(p_0, \ldots, p_k)}$ 
is always $\mathcal{A}$-equivalent to the distance-squared mapping 
$D_{(p_0, \ldots, p_k)}$.   
\end{enumerate}
\end{corollary}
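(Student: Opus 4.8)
The plan is to read off Corollary \ref{corollary 2} as a formal consequence of Theorem \ref{theorem 1} and Proposition \ref{proposition 1}, the only extra ingredient being that $\mathcal{A}$-equivalence is an equivalence relation. This last fact is immediate: if $g=H\circ f\circ h$ for diffeomorphisms $h,H$, then $f=H^{-1}\circ g\circ h^{-1}$, so the relation is symmetric, and composing two such factorizations makes it transitive, while reflexivity is trivial. Thus it suffices, in each range of $k$, to route both $L_{(p_0,\ldots,p_k)}$ and $D_{(p_0,\ldots,p_k)}$ through a common normal form and appeal to transitivity.

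First I would dispose of parts (1) and (2) simultaneously, as both live in the range $k\le n$ governed by Proposition \ref{proposition 1}(1). That proposition gives, \emph{unconditionally on the likeness of the recognition subspace}, that $D_{(p_0,\ldots,p_k)}$ is $\mathcal{A}$-equivalent to the definite fold normal form $\Phi_k$ (for $k<n$), respectively $\Phi_n$ (for $k=n$). On the Lorentzian side, Theorem \ref{theorem 1}(1)(a) and Theorem \ref{theorem 1}(2)(a) tell us \emph{exactly} when $L_{(p_0,\ldots,p_k)}$ is $\mathcal{A}$-equivalent to the same $\Phi_k$: namely, iff $V(p_0,\ldots,p_k)$ is time-like (for $k<n$), respectively time-like or space-like (for $k=n$). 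Combining these: when $V$ has the stated likeness, both mappings are $\mathcal{A}$-equivalent to $\Phi_k$ and hence to each other; conversely, if $L_{(p_0,\ldots,p_k)}$ and $D_{(p_0,\ldots,p_k)}$ are $\mathcal{A}$-equivalent, then since $D_{(p_0,\ldots,p_k)}$ is always $\mathcal{A}$-equivalent to $\Phi_k$, transitivity yields $L_{(p_0,\ldots,p_k)}\sim\Phi_k$, whence the only-if half of Theorem \ref{theorem 1} forces the claimed likeness. Part (3), where $n<k$, is even cleaner: Proposition \ref{proposition 1}(2) makes $D_{(p_0,\ldots,p_k)}$ $\mathcal{A}$-equivalent to the inclusion, and Theorem \ref{theorem 1}(3) makes $L_{(p_0,\ldots,p_k)}$ \emph{always} $\mathcal{A}$-equivalent to the same inclusion, so transitivity gives the unconditional equivalence.

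Since no computation is involved, there is no serious obstacle; the one point that must not be glossed over is the logic of the biconditionals in parts (1) and (2). Their only-if directions do not follow from the if-directions alone, but genuinely use that Theorem \ref{theorem 1} is a \emph{complete} classification, i.e.\ that the three normal forms $\Phi_k$, $\Psi_k$ and the light-like model are pairwise non-$\mathcal{A}$-equivalent; this is precisely what upgrades ``$L_{(p_0,\ldots,p_k)}\sim\Phi_k$ implies $V$ time-like'' from a plausible assertion to a proved one. As this completeness is already packaged into Theorem \ref{theorem 1}, invoking it is all that is required, and the proof needs nothing further.
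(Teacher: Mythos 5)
Your proposal is correct and matches the paper's own (implicit) argument: the paper simply says Corollary \ref{corollary 2} follows by "combining Theorem \ref{theorem 1} and Proposition \ref{proposition 1}", which is exactly your routing of both mappings through the common normal form and using transitivity of $\mathcal{A}$-equivalence. Your explicit remark that the only-if directions rest on the completeness of the classification in Theorem \ref{theorem 1} is a correct and worthwhile clarification of what the paper leaves tacit.
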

\begin{example}\label{example 1.3}
{\rm Let $\phi, \widetilde{\phi}, \psi : 
\mathbb{R}^{3}\to \mathbb{R}^2$ be  
polynomial mappings defined as follows:    
\begin{eqnarray*}
\phi(x, y, z) & = & 
\left(-x^2+y^2+z^2, -(x+1)^2+(y+2)^2+(z+1)^2\right),  \\  
\widetilde{\phi}(x, y, z) & = & 
\left(x^2+y^2+z^2, (x+1)^2+(y+2)^2+(z+1)^2\right), \\ 
\psi(x, y, z) & = & 
\left(-x^2+y^2+z^2, -(x+2)^2+(y+1)^2+(z+1)^2\right). 
\end{eqnarray*} 
By Corollary \ref{corollary 2}, 
$\phi$ is not $\mathcal{A}$-equivalent to $\widetilde{\phi}$, while $\psi$ is $\mathcal{A}$-equivalent to 
$\widetilde{\phi}$.  
}
\end{example}
\par 
\medskip 
In Section 2, preliminaries for the proof of Theorem \ref{theorem 1} are given. 
Theorem \ref{theorem 1} is proved in Section 3. 
Finally, as an appendix, a recognizable classification of degenerate 
Lorentzian distance-squared mappings is given in Section 4.    
\section{Preliminaries}\label{preliminaries}
\begin{lemma}\label{lemma 1}
The likeness of a vector subspace of 
$\mathbb{R}^{1,n}$ is invariant under Lorentz transformations.
\end{lemma}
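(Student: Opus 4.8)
The plan is to exploit the single defining property of a Lorentz transformation, namely that it preserves the Lorentzian inner product. Let $\Lambda \colon \mathbb{R}^{1,n} \to \mathbb{R}^{1,n}$ be a Lorentz transformation, so that $\langle \Lambda x, \Lambda y \rangle = \langle x, y\rangle$ for all $x, y \in \mathbb{R}^{1,n}$. First I would record the two immediate consequences that drive the whole argument: taking $y = x$ gives $\langle \Lambda x, \Lambda x\rangle = \langle x, x\rangle$, so $\Lambda$ preserves the sign of the Lorentzian length-squared of every vector; and since $\Lambda$ is an invertible linear map, it carries any vector subspace $V$ bijectively onto a subspace $\Lambda(V)$.

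Next I would translate the sign statement into the likeness vocabulary: for a nonzero $x$, the vector $\Lambda x$ is time-like, light-like or space-like precisely when $x$ is time-like, light-like or space-like respectively, since $\langle \Lambda x, \Lambda x\rangle = \langle x, x\rangle$ and $\Lambda x \neq 0$. The claim then reduces to checking the three cases of Definition \ref{likeness}, using the bijection $V \to \Lambda(V)$ induced by $\Lambda$.

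For the time-like and space-like cases the verification is direct. If $V$ contains a time-like vector $x$, then $\Lambda x \in \Lambda(V)$ is time-like, so $\Lambda(V)$ is time-like; applying the same reasoning to the Lorentz transformation $\Lambda^{-1}$ (which also preserves the inner product, hence is itself a Lorentz transformation) yields the converse, so $V$ is time-like if and only if $\Lambda(V)$ is. If every nonzero vector of $V$ is space-like, then every nonzero vector of $\Lambda(V)$, being of the form $\Lambda x$ for a unique nonzero $x \in V$, is space-like; hence $V$ is space-like if and only if $\Lambda(V)$ is. Finally, since the three likeness types partition the collection of subspaces and the first two are preserved by $\Lambda$, the light-like case is forced and needs no separate argument.

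There is essentially no hard step here: the entire lemma is a formal consequence of the inner-product-preserving property together with the linearity and invertibility of $\Lambda$. The only point requiring a little care is to handle both directions of each equivalence, which is cleanly achieved by invoking $\Lambda^{-1}$.
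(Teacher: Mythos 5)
Your proof is correct and complete: the sign-preservation of $\langle\cdot,\cdot\rangle$ under $\Lambda$, the bijection $V\to\Lambda(V)$, the use of $\Lambda^{-1}$ for the converses, and the observation that the light-like case is forced by the trichotomy are all sound. The paper offers no proof at all (it merely states that the lemma ``clearly holds''), so your argument simply spells out the standard reasoning the authors took for granted.
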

Lemma \ref{lemma 1} clearly holds.
\begin{lemma}\label{lemma 2}
Let ${\bf v}_1, \ldots, {\bf v}_n$ be the vectors of $\mathbb{R}^{1,n}$ 
defined as follows:
\begin{eqnarray*}
{\bf v}_1&=&(\alpha_1,1,0,\ldots,0),\\
{\bf v}_2&=&(\alpha_2,0,1,0,\ldots,0),\\
\vdots \\
{\bf v}_k&=&(\alpha_k,\underbrace{0,\ldots,0}_{(k-1)-tuples},1,0,\ldots,0),\\
{\bf v}_{k+1}&=&(\underbrace{0,\ldots,0}_{(k+1)-tuples},1,0,\ldots,0), \\
\vdots \\
{\bf v}_n&=&(0,\ldots,0,1),\\
\end{eqnarray*}
where $1\leq k<n$. 
Let $V$ be the $k$-dimensional vector subspace of $\mathbb{R}^{1,n}$ defined by 
$V=\sum_{i=1}^k\mathbb{R}{\bf v}_i$, and 
let $\widetilde{V}$ be the $n$-dimensional vector subspace of 
$\mathbb{R}^{1,n}$ defined by 
$\widetilde{V}=\sum_{i=1}^n\mathbb{R}{\bf v}_i$. 
Then, the following hold:
\begin{enumerate}
\item $\widetilde{V}$ is time-like if and only if  
$V$ is time-like.  
\item $\widetilde{V}$ is space-like if and only if  
$V$ is space-like.   
\item $\widetilde{V}$ is light-like if and only if 
$V$ is light-like.   
\end{enumerate}
\end{lemma}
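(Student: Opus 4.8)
The plan is to exploit the fact that $\widetilde{V}$ splits as a Lorentzian-orthogonal direct sum $\widetilde{V}=V\oplus W$, where $W=\sum_{i=k+1}^{n}\mathbb{R}\mathbf{v}_i=\sum_{i=k+1}^{n}\mathbb{R}e_i$ is the span of the last $n-k$ coordinate directions (here $e_j$ denotes the standard basis vector with $1$ in position $j$). Since the likeness of a subspace is governed entirely by the sign behaviour of the restricted quadratic form $Q(x)=\langle x,x\rangle$, and since attaching a positive-definite orthogonal summand can neither create nor destroy time-like or null directions, all three equivalences should fall out simultaneously.

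First I would record two elementary computations. For $1\le i\le k<j\le n$ we have $\mathbf{v}_i=\alpha_i e_0+e_i$ and $\mathbf{v}_j=e_j$, so $\langle \mathbf{v}_i,\mathbf{v}_j\rangle=\alpha_i\langle e_0,e_j\rangle+\langle e_i,e_j\rangle=0$, because $e_0$ is Lorentz-orthogonal to every spatial $e_j$ with $j\ge 1$ and because $e_i,e_j$ are distinct spatial basis vectors. This shows $V\perp W$. Next, for any $v=\sum_{i=k+1}^{n}c_i e_i\in W$ one computes $\langle v,v\rangle=\sum_{i=k+1}^{n}c_i^2\ge 0$, with equality only when $v=0$; hence $W$ is space-like and $Q|_W$ is positive definite. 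Linear independence of $\mathbf{v}_1,\ldots,\mathbf{v}_n$ is immediate from the coordinate pattern, so $\dim\widetilde{V}=n$ and $V\cap W=\{0\}$, confirming the orthogonal direct sum.

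With these in hand, every $w\in\widetilde{V}$ is written uniquely as $w=u+v$ with $u\in V$ and $v\in W$, and orthogonality yields $\langle w,w\rangle=\langle u,u\rangle+\langle v,v\rangle$ with $\langle v,v\rangle\ge 0$. For part (1), a time-like $w$ (that is, $\langle w,w\rangle<0$) forces $\langle u,u\rangle<0$, so $V$ is time-like; conversely any time-like $u\in V\subseteq\widetilde{V}$ already witnesses that $\widetilde{V}$ is time-like. For part (2), if $\widetilde{V}$ is space-like then so is its subspace $V$; conversely, if $V$ is space-like then for $w\ne 0$ at least one of $u,v$ is nonzero, whence $\langle u,u\rangle+\langle v,v\rangle>0$ and $\widetilde{V}$ is space-like. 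Part (3) then follows at once from the trichotomy of Definition \ref{likeness}, since (1) and (2) already match the time-like and space-like cases.

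I do not anticipate a genuine obstacle: the only real content is the orthogonal splitting $\widetilde{V}=V\oplus W$ with $W$ space-like, after which additivity of $Q$ across the summands makes each likeness type of $\widetilde{V}$ coincide with that of $V$. The one point deserving care is the bookkeeping of coordinate positions in the definitions of $\mathbf{v}_1,\ldots,\mathbf{v}_n$, so as to confirm that $\mathbf{v}_{k+1},\ldots,\mathbf{v}_n$ are exactly the spatial directions $e_{k+1},\ldots,e_n$ and therefore span a space-like subspace orthogonal to $V$.
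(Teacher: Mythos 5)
Your proof is correct and follows essentially the same route as the paper: the paper's displayed computations of $\langle\sum r_i\mathbf{v}_i,\sum r_i\mathbf{v}_i\rangle$ on $V$ and on $\widetilde{V}$ are exactly your orthogonal splitting $Q|_{\widetilde{V}}=Q|_V+Q|_W$ with $Q|_W$ positive definite, and both arguments then finish by combining the inclusion $V\subset\widetilde{V}$ with the trichotomy of likeness types. The only difference is organizational (the paper reduces to the three ``if'' directions up front, while you prove both directions of (1) and (2) and deduce (3)), which is a logically equivalent rearrangement.
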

\begin{proof}
By definition, $V$ is either time-like or space-like or light-like. 
Thus, in order to prove Lemma \ref{lemma 2}, 
it is sufficient to show only the \lq\lq if parts\rq\rq of $(1)$, $(2)$, and $(3)$. 

Suppose that $V$ is time-like. Then, since $V\subset \widetilde{V}$, 
$\widetilde{V}$ is also time-like by Definition 1.

For any vector $\sum_{i=1}^kr_i{\bf v}_i\in V$ and 
$\sum_{i=1}^nr_i{\bf v}_i\in \widetilde{V}$, 
we have the following:
\begin{eqnarray}
\left\langle\sum_{i=1}^kr_i{\bf v}_i, \sum_{i=1}^kr_i{\bf v}_i\right\rangle &=&
-\left(\sum_{i=1}^kr_i\alpha_i\right)^2+\sum_{i=1}^kr_i^2\\
\left\langle\sum_{i=1}^nr_i{\bf v}_i, \sum_{i=1}^nr_i{\bf v}_i\right\rangle &=&
-\left(\sum_{i=1}^kr_i\alpha_i\right)^2+\sum_{i=1}^nr_i^2\nonumber 
\\&=&
-\left(\sum_{i=1}^kr_i\alpha_i\right)^2+\sum_{i=1}^kr_i^2+\sum_{i=k+1}^nr_i^2.
\end{eqnarray}

Suppose that $V$ is space-like. Then, by Definition 1, 
any nonzero vector in $V$ is space-like. 
Thus, by (1) and (2), every nonzero vector 
in $\widetilde{V}$ is also space-like.
 
Suppose that $V$ is light-like. 
Then, by Definition 1, $V$ has a nonzero light-like vector ${\bf v}$. 
The vector ${\bf v}$ is also in 
$\widetilde{V}$. Since $V$ has no time-like vectors, 
by (1) and (2), 
it follows that $\widetilde{V}$ has no time-like vectors.
\end{proof}
\begin{lemma}\label{lemma 3}
Given $(\alpha_1,\ldots,\alpha_n)\in \mathbb{R}^n$, 
let $V$ be the $n$-dimensional vector subspace of 
$\mathbb{R}^{1,n}$ defined by 
$-x_0+\alpha_1 x_1 +\cdots +\alpha_n x_n=0$. 
Then, the following hold:
\begin{enumerate}
\item $\sum_{i=1}^n\alpha_i^2 > 1$ if and only if  
$V$ is time-like.  
\item $\sum_{i=1}^n\alpha_i^2<1$ if and only if  
$V$ is space-like.   
\item $\sum_{i=1}^n\alpha_i^2=1$ if and only if  
$V$ is light-like.   
\end{enumerate}
\begin{proof}
Let $H$ be the horizontal hyperplane $\{(1, x_1, \ldots, x_n)\; |\; 
x_i\in \mathbb{R} \}$. 
Set 
$V_1=H\cap V$.    
Suppose that $V_1=\emptyset$.      Then, the defining equation of $V$ is 
$-x_0=0$.  Thus, $\sum_{i=1}^n\alpha_i^2=0$ and $V$ is space-like.   
\par 
Next, suppose that $V_1\ne \emptyset$.      
Then, $\sum_{i=1}^n\alpha_i^2\ne 0$.    
Let $q$ be the point $(1, 0, \ldots, 0)$ and 
let $S_+^{n-1}$ be the light cone hypersurface $H\cap LC$.  
Then, it is clear that the Euclidean distance between $q$ and any point 
$x\in S_+^{n-1}$ is $1$.   
In order to complete the proof, 
it is sufficient to show that 
$1/\sqrt{\sum_{i=1}^n\alpha_i^2}$ is the Euclidean distance between $q$ and 
$V_1$.    
Since $-x_0+\alpha_1 x_1 +\cdots +\alpha_n x_n=0$ is a defining equation of 
$V$, $V_1$ is defined by 
$-1+\alpha_1 x_1 +\cdots +\alpha_n x_n=0$ in $H$.   Hence, the Euclidean distance 
between  $q$ and $V_1$ is 
$1/\sqrt{\sum_{i=1}^n\alpha_i^2}$.  
\end{proof} 
\end{lemma}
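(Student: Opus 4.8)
The plan is to reduce Lemma \ref{lemma 3} to a single sign analysis by restricting the Lorentzian quadratic form to $V$ directly. Since $V$ is the graph $x_0=\alpha_1 x_1+\cdots+\alpha_n x_n$ over the coordinates $(x_1,\ldots,x_n)$, every vector of $V$ has the form $\left(\sum_{i=1}^n\alpha_i x_i,\, x_1,\ldots,x_n\right)$, and its Lorentzian norm squared equals
\[
q(x_1,\ldots,x_n)=\sum_{i=1}^n x_i^2-\left(\sum_{i=1}^n\alpha_i x_i\right)^2.
\]
Thus the likeness of $V$ is governed entirely by the sign of the quadratic form $q$, and I would read this off from the Cauchy--Schwarz inequality applied to $\mathbf{a}=(\alpha_1,\ldots,\alpha_n)$ and $\mathbf{x}=(x_1,\ldots,x_n)$, writing $|\mathbf{a}|^2=\sum_{i=1}^n\alpha_i^2$.

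Next I would split into the three cases according to $|\mathbf{a}|^2$. When $|\mathbf{a}|^2>1$, the choice $\mathbf{x}=\mathbf{a}$ gives $q(\mathbf{a})=|\mathbf{a}|^2-|\mathbf{a}|^4=|\mathbf{a}|^2(1-|\mathbf{a}|^2)<0$, so $V$ contains a time-like vector and is time-like. When $|\mathbf{a}|^2=1$, the same choice yields $q(\mathbf{a})=0$, exhibiting a nonzero light-like vector, while Cauchy--Schwarz gives $\left(\sum_{i=1}^n\alpha_i x_i\right)^2\le|\mathbf{a}|^2\sum_{i=1}^n x_i^2=\sum_{i=1}^n x_i^2$, so $q\ge 0$ and $V$ has no time-like vector; hence $V$ is light-like. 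When $|\mathbf{a}|^2<1$, for every $\mathbf{x}\ne \mathbf{0}$ one has $\left(\sum_{i=1}^n\alpha_i x_i\right)^2\le|\mathbf{a}|^2\sum_{i=1}^n x_i^2<\sum_{i=1}^n x_i^2$, so $q$ is positive definite, every nonzero vector of $V$ is space-like, and $V$ is space-like.

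Finally, since the conditions $|\mathbf{a}|^2>1$, $|\mathbf{a}|^2=1$, $|\mathbf{a}|^2<1$ partition all possibilities, and likewise, by Definition \ref{likeness}, every subspace is exactly one of time-like, space-like, or light-like, the three forward implications just established automatically supply their converses; this closes the biconditionals $(1)$, $(2)$, $(3)$ simultaneously.

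The step I expect to demand the most care is the light-like case $|\mathbf{a}|^2=1$, where the definition must be checked in both directions at once: $V$ has to contain a genuine nonzero light-like vector \emph{and} contain no time-like vector. Both rely on using Cauchy--Schwarz in its sharp form, in particular on equality holding exactly when $\mathbf{x}$ is parallel to $\mathbf{a}$. (A geometric alternative would instead compare the Euclidean distance from $(1,0,\ldots,0)$ to the affine slice $V\cap\{x_0=1\}$ with the unit radius of the corresponding light-cone cross-section; the algebraic trichotomy above, however, avoids the special treatment needed when that slice is empty and keeps everything self-contained.)
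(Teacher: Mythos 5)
Your proof is correct, but it takes a genuinely different route from the paper's. The paper argues geometrically: it slices everything by the horizontal hyperplane $H=\{x_0=1\}$, observes that the light-cone section $H\cap LC$ is the unit sphere centred at $q=(1,0,\ldots,0)$, and compares the Euclidean distance $1/\sqrt{\sum_{i=1}^n\alpha_i^2}$ from $q$ to the affine slice $V\cap H$ with that unit radius; the degenerate slice $V\cap H=\emptyset$ (i.e.\ $\alpha=0$) has to be treated separately, and the translation from ``distance less than, equal to, or greater than $1$'' into the trichotomy of likeness is left largely to the reader's geometric intuition. You instead parametrize $V$ as a graph over $(x_1,\ldots,x_n)$, pull the Lorentzian form back to $q(\mathbf{x})=\sum_i x_i^2-\bigl(\sum_i\alpha_i x_i\bigr)^2$, and read off its sign behaviour from Cauchy--Schwarz, with the single test vector $\mathbf{x}=\mathbf{a}$ witnessing the time-like and light-like cases. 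Your version is more self-contained (every implication is an explicit inequality), handles $\mathbf{a}=0$ uniformly inside the case $|\mathbf{a}|^2<1$, and, like the paper, closes the converses by the trichotomy of both sides of each biconditional. One tiny remark: in the light-like case you do not actually need the \emph{equality characterization} in Cauchy--Schwarz --- the inequality $q\ge 0$ plus the explicit null vector coming from $\mathbf{x}=\mathbf{a}$ already suffice --- so that sentence slightly overstates what the argument relies on.
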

\section{Proof of Theorem \ref{theorem 1}}\label{section 3}
The proof needs more elaborated and more careful constructions 
of affine transformations of the source space and 
quadratic transformations of the target space 
than ones for the proof of Proposition \ref{proposition 1} given in \cite{ichikinishimura}.   
Let $(X_0, X_1, \ldots, X_k)$ be the standard coordinate of $\mathbb{R}^{k+1}$.    
\subsection{Proof of (1) of Theorem \ref{theorem 1}}
It is easily seen that any two among $\Phi_k, \Psi_k$   
and  $(x_0, \ldots, x_n)\mapsto 
(x_1, \ldots, x_k, x_0x_1+\sum_{i=k+1}^n x_i^2)$      
are not $\mathcal{A}$-equivalent.       
Moreover, by definition, $V(p_{0}, \ldots, p_k)$ is either time-like or space-like or light-like.    
Thus, in order to prove $(1)$ of Theorem \ref{theorem 1}, 
it is sufficient to show only the \lq\lq if parts \rq\rq of $(1)$ of 
Theorem \ref{theorem 1}.         
Set $p_i=(p_{i0}, p_{i1}, \ldots, p_{in})$ $(0\le i\le k)$.    
 \par 
\medskip 
\subsubsection{The generic case}\label{The generic case}
\qquad 
We first show the \lq\lq if parts\rq\rq of $(1)$ of Theorem \ref{theorem 1} in the case 
that $V(p_0, \ldots, p_k)\cap T= \{0\}$, where $T$ is the time axis 
$\{(x_0, 0, \ldots, 0)\; |\; x_0\in \mathbb{R}\}$.        
There are four steps.    
\par  
\smallskip 
\underline{STEP 1.}\qquad The purpose of Step 1 is to 
remove the redundant quadratic terms in $k$ components.   
In order to do so, we require the affine transformation 
of the target space $H_1 : \mathbb{R}^{k+1}\rightarrow \mathbb{R}^{k+1}$ defined by 
\begin{eqnarray*}
{}&{}&H_1(X_0,X_1,\ldots, X_k)\\
{}&=&\biggl(\frac{1}{2}\Bigl(X_0-X_1-(p_{00}-p_{10})^2+\sum_{i=1}^n (p_{0i}-p_{1i})^2\Bigl),\ldots, \\ 
{ } & { } & \qquad 
\frac{1}{2}\Bigl(X_0-X_k-(p_{00}-p_{k0})^2+\sum_{i=1}^n (p_{0i}-p_{ki})^2\Bigl),X_0\biggl).   
\end{eqnarray*}
The composition of $L_{(p_0,\ldots,p_k)}$ and $H_1$ has the following form: 
\begin{eqnarray*}
&{}&(H_1\circ L_{(p_0,\ldots,p_k)})(x_0,x_1,\ldots,x_n)\\
&=&
\left(
-\left(p_{10}-p_{00}\right)\left(x_0-p_{00}\right)
+\sum_{i=1}^n (p_{1i}-p_{0i})(x_i-p_{0i}),\ldots, 
\right. \\ 
{ } & { } & 
\qquad  
-\left.\left(x_0-p_{00}\right)^2+\sum_{i=1}^n(x_i-p_{0i})^2\right).   
\end{eqnarray*}
\par 
\smallskip 
\underline{STEP 2.}\qquad 
The purpose of Step 2 is to reduce the first $k$ components to linear functions.   
In order to do so, we require the affine transformation of the source space 
$H_2 : \mathbb{R}^{1,n}\rightarrow \mathbb{R}^{1,n}$ defined by
\[
H_2(x_0, x_1,\ldots,x_n)=(-x_0+p_{00},x_1+p_{01},\ldots,x_n+p_{0n}).
\]
The composition of $H_1\circ L_{(p_0,\ldots,p_k)}$ and $H_2$ has the 
following form:   
\begin{eqnarray*}
{}&{}&(H_1\circ L_{(p_0,\ldots,p_k)}\circ H_2)(x_0,x_1,\ldots,x_n)\\
{}&=&\left(\sum_{i=0}^n (p_{1i}-p_{0i})x_i,\ldots,\sum_{i=0}^n (p_{ki}-p_{0i})x_i,\, 
-x_0^2+\sum_{i=1}^nx_i^2\right)\\
{}&=&
\begin{pmatrix}
x_0,&x_1,&\cdots &x_n
\end{pmatrix}
\left(
\begin{array}{cccr}
p_{10}-p_{00}&\cdots &p_{k0}-p_{00}&-x_0\\
p_{11}-p_{01}&\cdots &p_{k1}-p_{01}&x_1\\
\vdots & & \vdots &\vdots \\
\vdots &   & \vdots &\vdots \\
p_{1n}-p_{0n}&\cdots &p_{k n}-p_{0n}&x_n
\end{array}
\right).
\end{eqnarray*}
Set 
\begin{eqnarray*}
A=\begin{pmatrix}
p_{10}-p_{00}&\cdots &p_{k0}-p_{00}\\
p_{11}-p_{01}&\cdots &p_{k1}-p_{01}\\
\vdots & & \vdots \\
\vdots &   & \vdots \\
p_{1n}-p_{0n}&\cdots &p_{k n}-p_{0n}
\end{pmatrix}
= 
\left(
\begin{array}{ccc}
p_{10}-p_{00}&\cdots &p_{k0}-p_{00}  \\
 \hline 
{ } & { } & { }  \\
{ } & { } & { } \\
{ } & \mbox{\smash{\LARGE $\widetilde{A}$}}& { } \\
{ } & { } & { } \\ 
{ } & { } & { } \\ 
\end{array}
\right).
\end{eqnarray*}
Since $(k+1)$-points $p_0,\ldots,p_k$ are in general position, 
it is clear that the rank of $(n+1)\times k$ matrix $A$ is $k$.    
Moreover, since $V(p_0, \ldots, p_k)\cap T=\{0\}$,  
the $n\times k$ matrix $\widetilde{A}$ has the same rank $k$.
There exists a $k\times k$ regular matrix $B$ such that the set of 
column vectors of $\widetilde {A}B$ is the subset of an orthonormal 
basis of $\mathbb{R}^{n}$. Set 
\begin{eqnarray*}
\widetilde {A}B=\begin{pmatrix}
    \alpha _{1,1}  & \cdots &\alpha _{1,k}\\
 \vdots           & \ddots    &  \vdots  \\
   \alpha _{n,1}  & \cdots &\alpha _{n,k}\\
\end{pmatrix}.
\end{eqnarray*}
\smallskip 
\underline{STEP 3.}\qquad    
The purpose of this step is to 
reduce the first $k$ components, which are linear functions, 
to coordinate functions $x_1, \ldots, x_k$, preserving 
the Lorentzian distance-squared function 
$-x_0^2+\sum_{i=1}^nx_i^2$ having the form 
$\xi (x_0,x_1,\ldots,x_k)+\sum_{i=k+1}^nx_i^2$. 
In order to do so, we construct the linear transformation of the target space 
$H_3 : \mathbb{R}^{k+1}\rightarrow \mathbb{R}^{k+1}$ below 
and the linear transformations of the source space 
$H_4, H_5 : \mathbb{R}^{1, n}\rightarrow \mathbb{R}^{1, n}$ below.    
\par 
Firstly, the construction of $H_3$ is given.   
Let $H_3 : \mathbb{R}^{k+1}\rightarrow \mathbb{R}^{k+1}$ be 
the linear transformation 
of $\mathbb{R}^{k+1}$ 
defined by
\begin{eqnarray*}
{}&{}&H_3(X_0,X_1,\ldots,X_k )=
\left( X_0, X_1, \ldots , X_k\right)
\left(
\begin{array}{ccc|c}
&&&  0\\ 
&\mbox{\smash{\Large $B$}}&&\vdots\\
&&&0\\ \hline
0&\cdots &0&1
\end{array}
\right).
\end{eqnarray*}
The composition of $H_1\circ L_{(p_0,\ldots,p_k)}\circ H_2$ and $H_3$ is given by 
\begin{eqnarray*}
{}&{}&\left(H_3\circ H_1\circ L_{(p_0,\ldots,p_k)}\circ H_2\right)\left(x_0,x_1,\ldots,x_n\right)\\
{}&=&
\left( 
x_0, x_1,\ldots , x_n 
\right)
\left(
\begin{array}{ccc|r}
&&&  -x_0\\ 
&&& x_1\\
&\mbox{\smash{\Large $A$}}&&\vdots \\
&&&\vdots \\
&&&x_n
\end{array}
\right)
\left(
\begin{array}{ccc|c}
&&&  0\\ 
&\mbox{\smash{\Large $B$}}&&\vdots\\
&&&0\\ \hline
0&\cdots &0&1
\end{array}
\right)\\
{}&=&
\left(x_0, x_1, \ldots , x_n\right)
\left(
\begin{array}{cccr}
     \alpha _{0,1}  & \cdots &\alpha _{0,k} & -x_0\\
       \alpha _{1,1}    &  \cdots   &  \alpha _{1,k}    & x_1\\
        \vdots             & \ddots &  \vdots   & \vdots  \\
  \alpha _{n,1}  & \cdots &\alpha _{n.k} & x_{n} \\
\end{array}
\right) .\\ 
\end{eqnarray*}

Set ${\bf a}_i=(\alpha_{0,i},\alpha_{1,i},\ldots,\alpha_{n,i})$, 
${\bf \widetilde{a}}_i=(\alpha_{1,i},\ldots,\alpha_{n,i})$ $(1\leq i\leq k)$. 
Note that $V(p_0,\ldots ,p_k)=\sum_{i=1}^k\mathbb{R}{\bf a}_i$. 
We can choose column vectors ${\bf \widetilde{a}}_{k+1},\ldots ,{\bf \widetilde{a}}_n$ such that 
the set $\{{\bf \widetilde{a}}_1,\ldots ,{\bf \widetilde{a}}_n\}$ is an 
orthonormal basis of $\mathbb{R}^n$. 
Put $C=({}^t{\bf \widetilde{a}_1},\ldots ,{}^t{\bf \widetilde{a}}_n)$. 
Note that the matrix $C$ is an $n\times n$ orthogonal matrix. 
Let $H_4 : \mathbb{R}^{1,n}\rightarrow \mathbb{R}^{1,n}$ be 
the linear isomorphism defined by 
\begin{eqnarray*}
{}&{}&H_4(x_0,x_1,\ldots,x_n)=
\left( 
x_0, x_1,\ldots , x_n 
\right)
\left(
\begin{array}{c|ccc}
1&0&\cdots &0\\
\hline 
0&&&\\
\vdots &&\mbox{\smash{\Large $C$}}&\\
0&&&  
\end{array}
\right).
\end{eqnarray*}
Note that $H_4$ and $H_4^{-1}$ are Lorentz transformations. 
The composition of $H_3\circ H_1\circ L_{(p_0,\ldots,p_k)}\circ H_2$ and $H_4$
is as follows:
\begin{eqnarray*}
{}&{}&(H_3\circ H_1\circ L_{(p_0,\ldots,p_k)}\circ H_2\circ H_4)(x_0,x_1,\ldots,x_n)\\
{}&=&\left(\alpha_{0,1}x_0+x_1,\ldots,\alpha _{0,k}x_0+x_k, -x_0^2
+\sum_{i=1}^nx_i^2\right).
\end{eqnarray*}

Then, we have 
\begin{eqnarray*}
{}&{}&(x_0,\alpha _{0,1}x_0+x_1,\ldots,\alpha _{0,k}x_0+x_k, x_{k+1},\ldots,x_n)\\
&=&(x_0,x_1,\ldots,x_n)
\begin{pmatrix}
1 &   \alpha _{0,1}  & \cdots &\cdots &\alpha _{0,k}   & &\\
& 1  &0   &   \cdots   & 0 && &\\
 &               & \ddots &   \ddots  & \vdots &\bigzerol &\\
 &               &  &   \ddots  & 0 &&\\
  &               &\bigzerol   &     &1& &\\
  &&&& &\ddots &\\
&&&&&&1
\end{pmatrix}.
\end{eqnarray*}
Let $D$ denote the last $(n+1)\times (n+1)$ matrix. 
For any $i$ $(2\leq i\leq n+1)$, let ${\bf b_i}$ 
denote the $i$-th column vector of $D$. 
Since $H_4^{-1}$ is a Lorentz transformation, by Lemma \ref{lemma 1}, 
the likeness of $\sum_{i=1}^k\mathbb{R}{}^t{\bf b_i}$ is 
the same as the likeness of $V(p_0,\ldots,p_k)$. 
Moreover, by Lemma \ref{lemma 2}, the likeness of 
$\sum_{i=1}^k\mathbb{R}{}^t{\bf b_i}$ is the same as 
the likeness of $\sum_{i=1}^n\mathbb{R}{}^t{\bf b_i}$. 
Thus, the likeness of $V(p_0,\ldots,p_k)$ is the 
same as the likeness of $\sum_{i=1}^n\mathbb{R}{}^t{\bf b_i}$. 
Note that the $n$-dimensional vector subspace 
$\sum_{i=1}^n\mathbb{R}{}^t{\bf b_i}\subset \mathbb{R}^{1,n}$ 
is defined by 
$-x_0+\alpha_{0,1}x_1+\cdots+\alpha_{0,k}x_k=0$. 
Therefore, by Lemma \ref{lemma 3}, we have the following: 
\begin{lemma}\label{lemma 4}
\begin{enumerate}
\item $\sum_{i=1}^k\alpha_{0,i}^2 > 1$ if and only if  
$V(p_0, \ldots, p_k)$ is time-like.  
\item $\sum_{i=1}^k\alpha_{0,i}^2<1$ if and only if  
$V(p_0, \ldots, p_k)$ is space-like.   
\item $\sum_{i=1}^k\alpha_{0,i}^2=1$ if and only if  
$V(p_0, \ldots, p_k)$ is light-like.   
\end{enumerate}
\end{lemma}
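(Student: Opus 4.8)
The plan is to deduce Lemma~\ref{lemma 4} directly from Lemma~\ref{lemma 3}, since all of the substantive construction has already been carried out in the reductions above. The observation driving the proof is that the three alternatives in Lemma~\ref{lemma 4} depend only on the single scalar $\sum_{i=1}^k\alpha_{0,i}^2$, and that the numbers $\alpha_{0,1},\ldots,\alpha_{0,k}$ are exactly the coefficients appearing in the first $k$ components $\alpha_{0,1}x_0+x_1,\ldots,\alpha_{0,k}x_0+x_k$ of the normal form obtained after composing $L_{(p_0,\ldots,p_k)}$ with $H_1,H_2,H_3$ and the Lorentz transformation $H_4$.

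First I would invoke the likeness equality already established: because $H_4^{-1}$ is a Lorentz transformation, Lemma~\ref{lemma 1} shows that the likeness of $\sum_{i=1}^k\mathbb{R}\,{}^t{\bf b}_i$ equals that of $V(p_0,\ldots,p_k)$, and Lemma~\ref{lemma 2} shows that enlarging this $k$-dimensional span to the $n$-dimensional subspace $\sum_{i=1}^n\mathbb{R}\,{}^t{\bf b}_i$ does not change its likeness. Hence the likeness of $V(p_0,\ldots,p_k)$ coincides with that of the $n$-dimensional subspace $\sum_{i=1}^n\mathbb{R}\,{}^t{\bf b}_i$. Reading off the coordinates of a general element $\sum_i r_i\,{}^t{\bf b}_i$ identifies this subspace as the hyperplane $-x_0+\alpha_{0,1}x_1+\cdots+\alpha_{0,k}x_k=0$.

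Next I would apply Lemma~\ref{lemma 3} to this hyperplane, taking the coefficient vector $(\alpha_1,\ldots,\alpha_n)=(\alpha_{0,1},\ldots,\alpha_{0,k},0,\ldots,0)\in\mathbb{R}^n$. Since its last $n-k$ entries vanish, $\sum_{i=1}^n\alpha_i^2=\sum_{i=1}^k\alpha_{0,i}^2$, so the trichotomy of Lemma~\ref{lemma 3} reads: $\sum_{i=1}^k\alpha_{0,i}^2$ is $>1$, $<1$, or $=1$ according as this hyperplane is time-like, space-like, or light-like. Transporting these three equivalences back through the likeness equality of the previous paragraph yields precisely the assertions (1), (2), (3) of Lemma~\ref{lemma 4}.

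I do not expect any genuinely hard step here, because the delicate work---the affine reduction of the target, the choice of the orthonormal completion $C$, and the verification that $H_4$ is a Lorentz transformation---has already been done in the construction above. The one point that must be checked with care is the bookkeeping ensuring that the coefficients of $x_{k+1},\ldots,x_n$ in the defining equation of $\sum_{i=1}^n\mathbb{R}\,{}^t{\bf b}_i$ are all zero; this is exactly what collapses $\sum_{i=1}^n\alpha_i^2$ to $\sum_{i=1}^k\alpha_{0,i}^2$ and legitimizes invoking Lemma~\ref{lemma 3} with the truncated coefficient vector.
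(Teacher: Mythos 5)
Your proposal is correct and follows essentially the same route as the paper: it chains Lemma \ref{lemma 1} (invariance under the Lorentz transformation $H_4^{-1}$), Lemma \ref{lemma 2} (enlarging the $k$-dimensional span of the ${}^t{\bf b}_i$ to the $n$-dimensional one without changing likeness), and Lemma \ref{lemma 3} applied to the hyperplane $-x_0+\alpha_{0,1}x_1+\cdots+\alpha_{0,k}x_k=0$ with coefficient vector $(\alpha_{0,1},\ldots,\alpha_{0,k},0,\ldots,0)$. The bookkeeping point you flag (the vanishing of the last $n-k$ coefficients) is exactly what the paper relies on as well.
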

Since $D$ is a regular matrix, we get 
\begin{eqnarray*}
D^{-1}=
\begin{pmatrix}
1 &   -\alpha _{0,1}  & \cdots &\cdots &-\alpha _{0,k}   & &\\
& 1  &0   &   \cdots   & 0 && &\\
 &               & \ddots &   \ddots  & \vdots &\bigzerol &\\
 &               &  &   \ddots  & 0 &&\\
  &               &\bigzerol   &     &1& &\\
  &&&& &\ddots &\\
&&&&&&1
\end{pmatrix}.
\end{eqnarray*}
Let $H_5 : \mathbb{R}^{1,n}\rightarrow \mathbb{R}^{1,n}$ 
be the linear isomorphism defined by 
\[
H_5(x)=xD^{-1}.
\]
The composition of $H_3\circ H_1\circ L_{(p_0,\ldots,p_k)}\circ H_2\circ H_4$ and $H_5$ is as follows:
\begin{eqnarray*}
{}&{}&(H_3\circ H_1\circ L_{(p_0,\ldots,p_k)}\circ H_2\circ H_4\circ H_5)(x_0,x_1,\ldots,x_n)\\
{}&=&\biggl(x_1,x_2,\ldots,x_{k},
-x_0^2
+\sum_{i=1}^{k}(-\alpha_{0,i}x_0+x_i)^2+
\sum_{i=k+1}^n x_i^2\biggl).
\end{eqnarray*}  
\par 
\smallskip 
\underline{STEP 4.}\qquad 
This is the last step. By transforming the last component of the mapping 
$H_3\circ H_1\circ L_{(p_0,\ldots,p_k)}\circ H_2\circ H_4\circ H_5$ with 
respect to the variable $x_0$, we have the following:    
\begin{eqnarray*}
{}&{}&-x_0^2
+\sum_{i=1}^{k}(-\alpha_{0,i}x_0+x_i)^2+
\sum_{i=k+1}^n x_i^2\\
{}&=&\left(-1+\sum_{i=1}^k\alpha_{0,i}^2\right)x_0^2
-2 x_0\sum_{i=1}^k\alpha_{0,i}x_i
+\sum_{i=1}^nx_i^2.    
\end{eqnarray*}
\par 
Firstly, the cases (a) and (b) of $(1)$ of Theorem \ref{theorem 1} are proved. 
By Lemma \ref{lemma 4}, 
it follows that $-1+\sum_{i=1}^k\alpha_{0,i}^2 \ne 0$.          
Thus, by completing the square with respect to the variable 
$x_0$, we have the following: 
\begin{eqnarray*}
{}&{}&\left(-1+\sum_{i=1}^k\alpha_{0,i}^2\right)x_0^2
-2 x_0\sum_{i=1}^k\alpha_{0,i}x_i
+\sum_{i=1}^nx_i^2\\
{}&=&\left(-1+\sum_{i=1}^k\alpha_{0,i}^2\right)
\left(x_0-\frac{\sum_{i=1}^k\alpha_{0,i}x_i}
{-1+\sum_{i=1}^k\alpha_{0,i}^2}\right)^2 
-\frac{\left(\sum_{i=1}^k\alpha_{0,i}x_i\right)^2}
{-1+\sum_{i=1}^k\alpha_{0,i}^2} 
+
\sum_{i=1}^nx_i^2.\\  
\end{eqnarray*}
Let $H_6:\mathbb{R}^{k+1}\rightarrow \mathbb{R}^{k+1}$ 
be the diffeomorphism defined by 
\begin{eqnarray*}
{}&{}&H_6 (X_0,X_1,\ldots,X_k)\\
{}&=&\left(X_0,X_1,\ldots,X_{k-1},X_k+
\frac{\left(\sum_{i=1}^k\alpha_{0,i}X_{i-1}\right)^2}
{-1+\sum_{i=1}^k\alpha_{0,i}^2} 
-
\sum_{i=1}^kX_{i-1}^2\right).\\ 
\end{eqnarray*}
The composition of $H_6$ and $H_3\circ H_1\circ L_{(p_0,\ldots,p_k)}\circ H_2\circ H_4\circ H_5$ 
is as follows: 
\begin{eqnarray*}
{}&{}&(H_6\circ H_3\circ H_1\circ L_{(p_0,\ldots,p_k)}\circ H_2\circ H_4\circ H_5)(x_0,x_1,\ldots, x_n)\\
{}&=&\left(x_1,\ldots,x_k,\left(-1+\sum_{i=1}^k\alpha_{0,i}^2\right)
\left(x_0-\frac{\sum_{i=1}^k\alpha_{0,i}x_i}
{-1+\sum_{i=1}^k\alpha_{0,i}^2}\right)^2
+
\sum_{i=k+1}^nx_i^2\right).
\end{eqnarray*}
Let $H_7:\mathbb{R}^{1,n}\rightarrow \mathbb{R}^{1,n}$ 
be the linear isomorphism defined by 
\begin{eqnarray*}
{}&{}&H_7(x_0,x_1,\ldots,x_n)\\
{}&=&\left(
\frac{x_0}{\sqrt {\mid -1+\sum_{i=1}^k\alpha_{0,i}^2 \mid }}
+
\frac{\sum_{i=1}^k\alpha_{0,i}x_i}{-1+\sum_{i=1}^k\alpha_{0,i}^2},x_1,\ldots,x_n\right).
\end{eqnarray*}
By Lemma \ref{lemma 4}, if $V(p_0,\ldots,p_k)$ is time-like, the composition of  
$H_6\circ H_3\circ H_1\circ L_{(p_0,\ldots,p_k)}\circ H_2\circ H_4\circ H_5$ and 
$H_7$ is as follows:
\begin{eqnarray*}
{}&{}&(H_6\circ H_3\circ H_1\circ L_{(p_0,\ldots,p_k)}\circ H_2\circ H_4\circ H_5\circ H_7)(x_0,x_1,\ldots,x_n)\\
{}&=&\left(x_1,\ldots,x_k,x_0^2+\sum_{i=k+1}^nx_i^2\right),
\end{eqnarray*}
and if $V(p_0,\ldots,p_k)$ is space-like, the composition of them is as follows:
\begin{eqnarray*}
{}&{}&(H_6\circ H_3\circ H_1\circ L_{(p_0,\ldots,p_k)}\circ H_2\circ H_4\circ H_5\circ H_7)(x_0,x_1,\ldots,x_n)\\
{}&=&\left(x_1,\ldots,x_k,-x_0^2+\sum_{i=k+1}^nx_i^2\right).
\end{eqnarray*}

Next, the case (c) of $(1)$ of Theorem \ref{theorem 1} is proved.
By Lemma \ref{lemma 4}, we have the following:
\begin{eqnarray*}
{}&{}&(H_3\circ H_1\circ L_{(p_0,\ldots,p_k)}\circ H_2\circ H_4\circ H_5)(x_0,x_1,\ldots,x_n)\\
{}&=&\left(x_1,\ldots,x_k,-2x_0\sum_{i=1}^k\alpha_{0,i}x_i+\sum_{i=1}^nx_i^2\right).
\end{eqnarray*}
Let $H_6':\mathbb{R}^{k+1}\rightarrow \mathbb{R}^{k+1}$ be 
the diffeomorphism defined by 
\begin{eqnarray*}
{}&{}&H_6' (X_0,X_1,\ldots,X_k)\\
{}&=&\left(X_0,X_1,\ldots,X_{k-1},X_k-\sum_{i=1}^kX_{i-1}^2\right).\\ 
\end{eqnarray*}
The composition of 
$H_3\circ H_1\circ L_{(p_0,\ldots,p_k)}\circ H_2\circ H_4\circ H_5$ and $H_6'$ 
is as follows:
\begin{eqnarray*}
{}&{}&(H_6'\circ H_3\circ H_1\circ L_{(p_0,\ldots,p_k)}\circ H_2\circ H_4\circ H_5)
(x_0,x_1,\ldots,x_n)\\
{}&=&\left(x_1,\ldots,x_k,-2x_0\sum_{i=1}^k\alpha_{0,i}x_i+\sum_{i=k+1}^nx_i^2\right).
\end{eqnarray*}
Since $-1+\sum_{i=1}^k\alpha_{0,i}^2=0$, 
there must exist a $j$ $(1\leq j\leq k)$ 
such that $\alpha_{0,j}\not=0$. 
By taking a linear transformation of the source space if necessary, 
without loss of generality, 
we may assume that $\alpha_{0,1}\not=0$. 
Then, we have 
\begin{eqnarray*}
{}&{}&\left(x_0,-2\sum_{i=1}^k\alpha_{0,i}x_i, x_2, \ldots,x_n\right)\\
{}&=&(x_0,x_1,\ldots,x_n)
\begin{pmatrix}
1&0&\cdots & \cdots  &\cdots &\cdots &\cdots &0\\
0&-2\alpha _{0,1}&&&&&&\\
\vdots &\vdots &1&&&\bigzerol &&\\
\vdots &\vdots &&\ddots&&&&\\
\vdots &-2\alpha _{0,k}&&&\ddots & &&\\
\vdots &0&&\bigzerol &&\ddots& &\\
\vdots &\vdots && &&&\ddots &\\
0&0&&&&&&1
\end{pmatrix}.
\end{eqnarray*}
Let $M$ denote the last $(n+1)\times (n+1)$ matrix.
Then, we have the following: 
\begin{eqnarray*}
M^{-1}=
\begin{pmatrix}
1&0&\cdots & \cdots  &\cdots &\cdots &\cdots &0\\
0&-\frac{1}{2\alpha _{0,1}}&&&&&&\\
\vdots &-\frac{\alpha_{0,2}}{\alpha_{0,1}}&1&&&\bigzerol &&\\
\vdots &\vdots &&\ddots&&&&\\
\vdots &-\frac{\alpha_{0,k}}{\alpha_{0,1}}&&&\ddots & &&\\
\vdots &0&&\bigzerol &&\ddots& &\\
\vdots &\vdots && &&&\ddots &\\
0&0&&&&&&1
\end{pmatrix}.
\end{eqnarray*}
Let $H_7':\mathbb{R}^{1,n}\rightarrow \mathbb{R}^{1,n}$ 
be the linear isomorphism defined by 
\[H_7'(x)=xM^{-1}.\]
The composition of 
$H_6'\circ H_3\circ H_1\circ L_{(p_0,\ldots,p_k)}\circ H_2\circ H_4\circ H_5$ 
and $H_7'$ is 
as follows:
\begin{eqnarray*}
{}&{}&(H_6'\circ H_3\circ H_1\circ L_{(p_0,\ldots,p_k)}\circ H_2\circ H_4\circ H_5\circ H_7')(x_0,x_1,\ldots,x_n)\\
{}&=&\left(-\frac{x_1}{2\alpha _{0,1}}-\frac{\sum_{i=2}^k\alpha _{0,i}x_i}{\alpha _{0,1}},
x_2,\ldots,
x_k,x_0x_1+\sum_{i=k+1}^nx_i^2\right).
\end{eqnarray*}
Let $H_8':\mathbb{R}^{k+1}\rightarrow \mathbb{R}^{k+1}$ 
be the linear isomorphism defined by 
\begin{eqnarray*}
H_8'(X_0,X_1,\ldots,X_k)
=\left(-2\left(\alpha _{0,1}X_0+\sum_{i=1}^{k-1}\alpha _{0,i+1}X_i\right),X_1,\ldots,X_k\right).
\end{eqnarray*}
The composition of 
$H_6'\circ H_3\circ H_1\circ L_{(p_0,\ldots,p_k)}\circ H_2\circ H_4\circ H_5\circ H_7'$ and $H_8'$ is 
as follows:
\begin{eqnarray*}
{}&{}&(H_8'\circ H_6'\circ H_3\circ H_1\circ L_{(p_0,\ldots,p_k)}\circ H_2\circ H_4\circ H_5\circ H_7')(x_0,x_1,\ldots,x_n)\\
{}&=&\left(x_1,\ldots,x_k,x_0x_1+\sum_{i=k+1}^nx_i^2\right).
\end{eqnarray*}\hfill\qed
\smallskip
\subsubsection{The case $V(p_0, \ldots, p_k)\cap T= T$}\label{special case}
\qquad 
The strategy of the proof in this case is the same as 
the one given in \ref{The generic case}. 
In this case also, there are four steps.   
\par      
\underline{STEP 1.}\qquad 
Step 1 is completely the same as Step 1 of \ref{The generic case}.   
Thus, by the same diffeomorphism $H_1: \mathbb{R}^{k+1}\to \mathbb{R}^{k+1}$, we have the 
following:
\begin{eqnarray*}
&{}&(H_1\circ L_{(p_0,\ldots,p_k)})(x_0,x_1,\ldots,x_n)\\
&=&
\left(
-\left(p_{10}-p_{00}\right)\left(x_0-p_{00}\right)
+\sum_{j=1}^n (p_{1j}-p_{0j})(x_j-p_{0j}),\ldots, 
\right. \\ 
{ } & { } & 
\qquad  
-\left.\left(x_0-p_{00}\right)^2+\sum_{j=1}^n(x_j-p_{0j})^2\right).   
\end{eqnarray*}
\par 
\smallskip 
\underline{STEP 2.}\qquad 
By the same diffeomorphism $H_2 : \mathbb{R}^{1,n}\rightarrow \mathbb{R}^{1,n}$, 
we have the following:  
\begin{eqnarray*}
{}&{}&(H_1\circ L_{(p_0,\ldots,p_k)}\circ H_2)(x_0,x_1,\ldots,x_n)\\
{}&=&\left(\sum_{j=0}^n (p_{1j}-p_{0j})x_j,\ldots,\sum_{j=0}^n (p_{k j}-p_{0j})x_j,\, 
-x_0^2+\sum_{j=1}^nx_j^2\right)\\
{}&=&
\begin{pmatrix}
x_0,&x_1,&\cdots &x_n
\end{pmatrix}
\left(
\begin{array}{cccr}
p_{10}-p_{00}&\cdots &p_{k0}-p_{00}&-x_0\\
p_{11}-p_{01}&\cdots &p_{k1}-p_{01}&x_1\\
\vdots & & \vdots &\vdots \\
\vdots &   & \vdots &\vdots \\
p_{1n}-p_{0n}&\cdots &p_{k n}-p_{0n}&x_n
\end{array}
\right).
\end{eqnarray*}
Set 
\begin{eqnarray*}
A=\begin{pmatrix}
p_{10}-p_{00}&\cdots &p_{k0}-p_{00}\\
p_{11}-p_{01}&\cdots &p_{k1}-p_{01}\\
\vdots & & \vdots \\
\vdots &   & \vdots \\
p_{1n}-p_{0n}&\cdots &p_{k n}-p_{0n}
\end{pmatrix}.
\end{eqnarray*}
Since $(k+1)$-points $p_0,\ldots,p_k$ are in general position, 
it is clear that the rank of $(n+1)\times k$ matrix $A$ is $k$.    
Moreover, since $V(p_0, \ldots, p_k)\cap T=T$, 
there exists a $k\times k$ regular matrix $B$ such that the set of 
column vectors of $AB$ is the subset of an orthonormal 
basis of $\mathbb{R}^{n+1}$ and the matrix $AB$ has the following form:
\begin{eqnarray*}
AB=\begin{pmatrix}
1&0&\cdots &0\\
0&    \alpha _{1,2}  & \cdots &\alpha _{1,k}\\
\vdots & \vdots           & \ddots    &  \vdots  \\
0 &   \alpha _{n,2}  & \cdots &\alpha _{n,k}\\
\end{pmatrix}.
\end{eqnarray*}
\smallskip 
\underline{STEP 3.}\qquad    
By the same diffeomorphism $H_3 : \mathbb{R}^{k+1}\rightarrow \mathbb{R}^{k+1}$ 
as in the Step 3 of the proof of $(1)$ of Theorem \ref{theorem 1}, 
we have the following:
\begin{eqnarray*}
{}&{}&\left(H_3\circ H_1\circ L_{(p_0,\ldots,p_k)}\circ H_2\right)\left(x_0,x_1,\ldots,x_n\right)\\
{}&=
&
\left( 
x_0, x_1,\ldots , x_n 
\right)
\left(
\begin{array}{ccc|r}
&&&  -x_0\\ 
&&& x_1\\
&\mbox{\smash{\Large $A$}}&&\vdots \\
&&&\vdots \\
&&&x_n
\end{array}
\right)
\left(
\begin{array}{ccc|c}
&&&  0\\ 
&\mbox{\smash{\Large $B$}}&&\vdots\\
&&&0\\ \hline
0&\cdots &0&1
\end{array}
\right)\\
{}&=&
\left(x_0, x_1, \ldots , x_n\right)
\left(
\begin{array}{ccccr}
 1&0&\cdots &0& -x_0\\
0&    \alpha _{1,2}  & \cdots &\alpha _{1,k}& x_1\\
\vdots & \vdots           & \ddots    &  \vdots & \vdots \\
0 &   \alpha _{n,2}  & \cdots &\alpha _{n,k} & x_{n}\\ 
\end{array}
\right) .\\ 
\end{eqnarray*}
For natural numbers $i$ $(1\leq i\leq k-1)$, 
set ${\bf \widetilde{a}}_i=(\alpha_{1,i+1},\ldots,\alpha_{n,i+1})$, 
and we see the set $\{{\bf \widetilde{a}}_1,\ldots ,{\bf \widetilde{a}}_{k-1}\}$ is a  
subset of an orthonormal basis of $\mathbb{R}^n$. 
We can choose ${\bf \widetilde{a}}_{k},\ldots ,{\bf \widetilde{a}}_n$ so that 
the set $\{{\bf \widetilde{a}}_1,\ldots ,{\bf \widetilde{a}}_n\}$ is an 
orthonormal basis of $\mathbb{R}^n$. 
Put $C=({}^t{\bf \widetilde{a}}_1,\ldots ,{}^t{\bf \widetilde{a}}_n)$. 
Note that the matrix $C$ is 
an $n\times n$ orthogonal matrix. 
By the same linear isomorphism 
\begin{eqnarray*}
{}&{}&H_4(x_0,x_1,\ldots,x_n)=
\left( 
x_0, x_1,\ldots , x_n 
\right)
\left(
\begin{array}{c|ccc}
1&0&\cdots &0\\
\hline 
0&&&\\
\vdots &&\mbox{\smash{\Large $C$}}&\\
0&&&  
\end{array}
\right)
\end{eqnarray*}
appeared in the Step 3 of the proof of $(1)$ of Theorem \ref{theorem 1}, 
we have the following:
\begin{eqnarray*}
{}&{}&(H_3\circ H_1\circ L_{(p_0,\ldots,p_k)}\circ H_2\circ H_4)(x_0,x_1,\ldots,x_n)\\
{}&=&\left(x_0,x_1,\ldots,x_{k-1},-x_0^2+\sum_{i=1}^nx_i^2\right).
\end{eqnarray*}

\underline{STEP 4.}\qquad 
This is the last step. Firstly, in order to remove $x_0^2,\ldots,x_{k-1}^2$ of the last 
component, we construct the diffeomorphism of the target space 
$\widetilde{H}_5 : \mathbb{R}^{k+1}\rightarrow \mathbb{R}^{k+1}$ below:
\begin{eqnarray*}
{}&{}&\widetilde{H}_5(X_0,X_1,\ldots,X_k)=
\left( 
X_0, X_1,\ldots , X_{k-1}, X_k+X_0^2-\sum_{i=1}^{k-1}X_i^2 
\right).
\end{eqnarray*}
The composition of $H_3\circ H_1\circ L_{(p_0,\ldots,p_k)}\circ H_2\circ H_4$ and $\widetilde{H}_5$
is as follows:
\begin{eqnarray*}
{}&{}&(\widetilde{H}_5\circ H_3\circ H_1\circ L_{(p_0,\ldots,p_k)}\circ H_2\circ H_4)(x_0,x_1,\ldots,x_n)\\
&=&\left(x_0,x_1,\ldots,x_{k-1},\sum_{i=k}^nx_i^2\right).
\end{eqnarray*}
Let  $\widetilde{H}_6 : \mathbb{R}^{1,n}\rightarrow \mathbb{R}^{1,n}$ be the linear isomorphism defined by 
\begin{eqnarray*}
{}&{}&\widetilde{H}_6(x_0,x_1,\ldots,x_n)=
\left( 
x_1,x_2,\ldots , x_k,x_0,x_{k+1},\ldots,x_n
\right).
\end{eqnarray*}
The composition of $H_3\circ H_1\circ L_{(p_0,\ldots,p_k)}\circ H_2\circ H_4\circ \widetilde{H}_5$ 
and $\widetilde{H}_6$
is as follows:
\begin{eqnarray*}
{}&{}&(\widetilde{H}_5\circ H_3\circ H_1\circ L_{(p_0,\ldots,p_k)}\circ H_2\circ H_4\circ \widetilde{H}_6)
(x_0,x_1,\ldots,x_n)\\
&=&\left(x_1,\ldots,x_k,x_0^2+\sum_{i=k+1}^nx_i^2\right).
\end{eqnarray*}
\hfill\qed
\begin{remark}\label{remark 3.1}
{\rm 
\begin{enumerate}
\item Let $f_{{p_0p_i}}: \mathbb{R}^{1,n}\to \mathbb{R}$ be the linear function 
defined by $f_{{p_0 p_i}}(x)=\overrightarrow{p_0p_i}\cdot (x-p_i)$ for any $i$ $(1\le i\le n-1)$.      
Then, Step 1 and Step 2 of the proof of $(1)$ of Theorem \ref{theorem 1} 
imply the following for any 
$y\in \mathbb{R}^{n}$.       
\[ 
L_{(p_0, \ldots, p_{n-1})}^{-1}(y)=
H_2\left(\left(f_{{p_0 p_1}}, \ldots, f_{{p_0 p_{n-1}}}, \ell_{p_0}^2\right)^{-1}(H_1(y))\right).   
\]
\item Since the proof of $(1)$ of Theorem \ref{theorem 1} 
requires only affine diffeomorphisms for diffeomorphisms of the source 
space, we can choose an affine diffeomorphism 
as the diffeomorphism 
$h: \mathbb{R}^{1,n}\to \mathbb{R}^{1,n}$ in Corollary \ref{corollary 1}.   
\end{enumerate}
}
\end{remark}
\subsection{Proof of (2) of Theorem \ref{theorem 1}}
The strategy of the proof of $(2)$ of Theorem 1 
is the same as the strategy of the proof of $(1)$ of Theorem 1.    
In the case that $V(p_0,\ldots,p_n)$ is space-like, 
compose the mapping 
$H_6\circ H_3\circ H_1\circ L_{(p_0,\ldots,p_n)}\circ H_2\circ H_4\circ H_5\circ H_7$ 
and the linear coordinate transformation of the target 
$(X_0,X_1,\ldots,X_n)\mapsto (X_0,X_1,\ldots,-X_n)$.    
\hfill\qed 
\par 
\smallskip 
\subsection{Proof of (3) of Theorem \ref{theorem 1}}
The strategy of the proof of $(3)$ of Theorem 1 
is the same as the strategy of the proof of $(1)$ of Theorem 1.    
In this case, since the rank of the $(n+1)\times k$ matrix $A$ 
appeared in Step 2 of the proof of 
$(1)$ of Theorem \ref{theorem 1} is $n+1$, there exists a $k\times k$ regular 
matrix $B$ such that the following holds:
 \begin{eqnarray*}
A{B}=
\begin{pmatrix}
1&&\mbox{\smash{\Large $0$}}&         0 & \cdots & 0   \\
&\ddots &&        \vdots &   & \vdots \\
\mbox{\smash{\Large $0$}}&&1&            0  &  \cdots  &  0\\
\end{pmatrix}. 
\end{eqnarray*}
By the same method as the proof of $(1)$ of Theorem \ref{theorem 1}, we have 
\begin{eqnarray*}
{}&{}&\left(H_3\circ H_1\circ L_{(p_0,\ldots,p_k)}\circ H_2\right)\left(x_0,x_1,\ldots,x_n\right)\\
{}&=&
\left( 
x_0, x_1,\ldots , x_n 
\right)
\left(
\begin{array}{ccc|r}
&&&  -x_0\\ 
&&& x_1\\
&\mbox{\smash{\Large $A$}}&&\vdots \\
&&&\vdots \\
&&&x_n
\end{array}
\right)
\left(
\begin{array}{ccc|c}
&&&  0\\ 
&\mbox{\smash{\Large $B$}}&&\vdots\\
&&&0\\ \hline
0&\cdots &0&1
\end{array}
\right)\\
{}&=&
\left(x_0, x_1, \ldots , x_n\right)
\begin{pmatrix}
1&&\mbox{\smash{\Large $0$}}&    0 & \cdots & 0 &-x_0  \\
&\ddots &&        \vdots &   & \vdots &\vdots  \\
\mbox{\smash{\Large $0$}}&&1&            0  &  \cdots  &  0 &x_n\\
\end{pmatrix}\\
&=&\left(x_0,x_1,\ldots ,x_n,0,\ldots ,0,-x_0^2+\sum_{i=1}^nx_i^2\right).\\
\end{eqnarray*}
Finally, by composing $H_3\circ H_1\circ L_{(p_0,\ldots,p_k)}\circ H_2$ and 
$\widetilde{H}_5$ appeared in the proof of $(1)$ of Theorem \ref{theorem 1}, we have the following: 
\begin{eqnarray*}
{}&{}&(\widetilde{H}_5\circ H_3\circ H_1\circ L_{(p_0,\ldots,p_k)}\circ H_2)
(x_0,x_1,\ldots,x_n)=\left(x_0,x_1,\ldots,x_n,0,\ldots,0\right).
\end{eqnarray*}
\hfill\qed
\section{Appendix}
By the similar proof as in Section \ref{section 3}, we can 
obtain a recognizable classification of 
Lorentzian distance-squared mappings 
when the given $(k+1)$-points are not in general position as follows. 
\begin{theorem}\label{not in general position}
Let $j, k$ be two positive integers satisfying $j<k$ and 
let $\tau : \mathbb{R}^{j+1}\rightarrow \mathbb{R}^{k+1}$ be the 
inclusion:
\[\tau (X_0,X_1,\ldots,X_j)=(X_0,X_1,\ldots,X_j,0,\ldots,0).\]
\begin{enumerate}
\item 
Let $j, k, n$ be three positive integers satisfying $j<n, j<k$, 
and let $p_0, \ldots, p_k$$\in \mathbb{R}^{1,n}$ be $(k+1)$ 
points such that $\dim V(p_0, \ldots, p_j)=\dim V(p_0, \ldots, p_k)=j$. 
Then, the following hold:   
\begin{enumerate}
\item The Lorentzian distance-squared mapping 
$L_{(p_0, \ldots, p_k)}: \mathbb{R}^{1,n}\to \mathbb{R}^{k+1}$ is 
$\mathcal{A}$-equivalent to the mapping $\tau \circ \Phi_j$ if and only if 
the recognition subspace $V(p_0, \ldots, p_k)$ is time-like. 
\item The Lorentzian distance-squared mapping 
$L_{(p_0, \ldots, p_k)}:\mathbb{R}^{1,n}\to \mathbb{R}^{k+1}$ is 
$\mathcal{A}$-equivalent to the mapping $\tau \circ \Psi_j$ if and only if 
the recognition subspace 
$V(p_0, \ldots, p_k)$ is space-like. 
\item The Lorentzian distance-squared mapping 
$L_{(p_0, \ldots, p_k)}:\mathbb{R}^{1,n}\to \mathbb{R}^{k+1}$ is 
$\mathcal{A}$-equivalent to the mapping 
\[
(x_0, \ldots, x_n)\mapsto \left( x_1, \ldots, x_j, x_0x_1
+\sum_{i=j+1}^n x_i^2,0,\ldots,0\right)
\]
if and only if 
the recognition subspace 
$V(p_0, \ldots, p_k)$ is light-like. 
\end{enumerate}
\item 
Let $k, n$ be two positive integers satisfying $n<k$ and 
let $p_0, \ldots, p_k$$\in \mathbb{R}^{1,n}$ be $(k+1)$ 
points such that $\dim V(p_0,\ldots,p_n)=\dim V(p_0,\ldots,p_k)=n$
Then, the following hold: 
\begin{enumerate}
\item The Lorentzian distance-squared mapping 
$L_{(p_0, \ldots, p_k)}:\mathbb{R}^{1,n}\to \mathbb{R}^{k+1}$ is 
$\mathcal{A}$-equivalent to the normal form of 
definite fold mapping $\tau \circ \Phi_n$ if and only if 
$V(p_0, \ldots, p_k)$ is time-like or space-like. 
\item The Lorentzian distance-squared mapping 
$L_{(p_0, \ldots, p_k)}: \mathbb{R}^{1,n}\to \mathbb{R}^{k+1}$ is 
$\mathcal{A}$-equivalent to the mapping  
$(x_0, \ldots, x_n)\mapsto (x_1, \ldots, x_n, x_0x_1,0\ldots,0)$ 
if and only if 
the recognition subspace 
$V(p_0, \ldots, p_k)$ is light-like.
\end{enumerate}
\item Let $k,n$ be two positive integers and let $p_0,\ldots,p_k\in \mathbb{R}^{n,1}$ 
be the same point. Then, the Lorentzian distance-squared mapping 
$L_{(p_0,\ldots,p_k)}:\mathbb{R}^{n,1}\rightarrow \mathbb{R}^{k+1}$ is 
$\mathcal{A}$-equivalent to the mapping 
\[(x_0,\ldots,x_n)\mapsto \left(-x_0^2+\sum_{i=1}^nx_i^2,0,\ldots,0\right).\]
\end{enumerate}
\end{theorem}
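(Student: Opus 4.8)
The plan is to rerun the machinery of the proof of Theorem \ref{theorem 1} almost verbatim, the only new phenomenon being that the failure of general position forces the matrix $A$ of Step 2 to drop rank. Writing $p_i=(p_{i0},\ldots,p_{in})$ and applying the affine transformations $H_1$ and $H_2$ exactly as before, one reaches a mapping whose first $k$ components are the linear functions $\langle \overrightarrow{p_0 p_i}, x-p_0\rangle$ $(1\le i\le k)$ and whose last component is $\ell_{p_0}^2$. The hypothesis $\dim V(p_0,\ldots,p_j)=\dim V(p_0,\ldots,p_k)=j$ says precisely that the column space of the $(n+1)\times k$ matrix $A$ equals $V(p_0,\ldots,p_k)$ and has dimension $j$, with $\overrightarrow{p_0 p_1},\ldots,\overrightarrow{p_0 p_j}$ already a basis; hence $\overrightarrow{p_0 p_{j+1}},\ldots,\overrightarrow{p_0 p_k}$ are linear combinations of these.

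The key step is to choose the $k\times k$ regular matrix $B$ of Step 2 so that the first $j$ columns of $AB$ form a chosen basis of $V$ that is orthonormal in the relevant spatial sense, while the last $k-j$ columns of $AB$ are $0$; this is possible because the first $j$ columns of $A$ are independent and the remaining ones lie in their span. After the linear target transformation $H_3$, the components indexed $j,j+1,\ldots,k-1$ then become identically zero, while the $j$ surviving linear components together with the quadratic last component form exactly the configuration treated in Theorem \ref{theorem 1} with $k$ replaced by $j$. Since $V(p_0,\ldots,p_k)=V(p_0,\ldots,p_j)$ and the $j+1$ points $p_0,\ldots,p_j$ are in general position, the analog of Lemma \ref{lemma 4} applies verbatim to them, so the likeness of $V(p_0,\ldots,p_k)$ governs the reduction as in Theorem \ref{theorem 1}: in part $(1)$, where $j<n$, the leftover sum $\sum_{i=j+1}^n x_i^2$ is nonempty and time-like, space-like and light-like yield $\Phi_j$, $\Psi_j$ and the parabolic form respectively; in part $(2)$, where $\dim V=n$, the leftover sum is empty, so $+x_0^2$ and $-x_0^2$ are $\mathcal{A}$-equivalent and time-like and space-like both collapse to $\Phi_n$ while light-like gives the $x_0x_1$ form. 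A final permutation of the target coordinates moves the quadratic component into position and leaves the $k-j$ zero components trailing, which is exactly precomposition with the inclusion $\tau$. As in Theorem \ref{theorem 1}, the listed normal forms are pairwise non-$\mathcal{A}$-equivalent and the three likeness types are exhaustive, so only the ``if'' directions require proof.

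I expect the principal obstacle to be organizational rather than conceptual: one must run the argument separately in the two subcases $V(p_0,\ldots,p_k)\cap T=\{0\}$ and $V(p_0,\ldots,p_k)\cap T=T$ (as in \ref{The generic case} and \ref{special case}), check that the single matrix $B$ can simultaneously orthonormalize the surviving columns and annihilate the redundant ones, and in the light-like case reproduce the extra normalization of Step 4 that isolates the $x_0x_1$ term. Part $(3)$ is immediate and may be disposed of first: when all $p_i$ coincide, every component of $L_{(p_0,\ldots,p_k)}$ equals the single function $\ell_{p_0}^2$, so subtracting the zeroth component from each of the others by a linear target isomorphism gives $(\ell_{p_0}^2,0,\ldots,0)$, and the translation $x\mapsto x+p_0$ puts this into the stated form $(-x_0^2+\sum_{i=1}^n x_i^2,0,\ldots,0)$.
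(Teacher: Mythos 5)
Your proposal is correct and follows essentially the same route as the paper: run Steps 1--2 of Theorem \ref{theorem 1}, use the rank-$j$ hypothesis to annihilate the redundant $k-j$ columns by a target linear isomorphism (the paper does this with a separate matrix $\widetilde{B}$ after the orthonormalizing $B$, whereas you merge the two into one $k\times k$ matrix, which is harmless), then reduce to the situation of Theorem \ref{theorem 1} with $j$ in place of $k$, treating the two subcases $V\cap T=\{0\}$ and $V\cap T=T$ separately. Your disposal of part $(3)$ by subtracting the zeroth target coordinate and translating the source is exactly the paper's argument.
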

\begin{proof}
Firstly, the generic case of $(1)$ of Theorem \ref{not in general position} is proved. 
The strategy of the proof in this case is the same as the proof 
of the generic case of $(1)$ of Theorem \ref{theorem 1}. We have the following: 
\begin{eqnarray*}
{}&{}&(H_1\circ L_{(p_0,\ldots,p_k)}\circ H_2)(x_0,x_1,\ldots,x_n)\\
{}&=&
\begin{pmatrix}
x_0,&x_1,&\cdots &x_n
\end{pmatrix}
\left(
\begin{array}{cccr}
p_{10}-p_{00}&\cdots &p_{k0}-p_{00}&-x_0\\
p_{11}-p_{01}&\cdots &p_{k1}-p_{01}&x_1\\
\vdots & & \vdots &\vdots \\
\vdots &   & \vdots &\vdots \\
p_{1n}-p_{0n}&\cdots &p_{kn}-p_{0n}&x_n
\end{array}
\right).
\end{eqnarray*}
Set 
\begin{eqnarray*}
A=\begin{pmatrix}
p_{10}-p_{00}&\cdots &p_{j0}-p_{00}\\
p_{11}-p_{01}&\cdots &p_{j1}-p_{01}\\
\vdots & & \vdots \\
\vdots &   & \vdots \\
p_{1n}-p_{0n}&\cdots &p_{jn}-p_{0n}
\end{pmatrix}
= 
\left(
\begin{array}{ccc}
p_{10}-p_{00}&\cdots &p_{j0}-p_{00}  \\
 \hline 
{ } & { } & { }  \\
{ } & { } & { } \\
{ } & \mbox{\smash{\LARGE $\widetilde{A}$}}& { } \\
{ } & { } & { } \\ 
{ } & { } & { } \\ 
\end{array}
\right).
\end{eqnarray*}
Since $(j+1)$-points $p_0,\ldots,p_j$ are in general position, 
it is clear that the rank of $(n+1)\times j$ matrix $A$ is $j$.    
Moreover, since $V(p_0, \ldots, p_j)\cap T=\{0\}$,  
the $n\times j$ matrix $\widetilde{A}$ has the same rank $j$.
There exists a $j\times j$ regular matrix $B$ such that the set of 
column vectors of $\widetilde {A}B$ is the subset of an orthonormal 
basis of $\mathbb{R}^{n}$. Set
\begin{eqnarray*}
\widetilde {A}B=\begin{pmatrix}
    \alpha _{1,1}  & \cdots &\alpha _{1,j}\\
 \vdots           & \ddots    &  \vdots  \\
   \alpha _{n,1}  & \cdots &\alpha _{n,j}\\
\end{pmatrix}.
\end{eqnarray*}
Let $H_3 : \mathbb{R}^{k+1}\rightarrow \mathbb{R}^{k+1}$ be 
the linear isomorphism defined by 
\begin{eqnarray*}
{}&{}&H_3(X_0,X_1,\ldots,X_k )=
\left( X_0, X_1, \ldots , X_k\right)
\left(
\begin{array}{ccc|ccc}
&&& & \\ 
&\mbox{\smash{\Large $B$}}&&&\bigzerol \\
&&&& &\\ \hline
&&&&&\\ 
&\bigzerol &&& \mbox{\smash{\Large $E_{k-j+1}$}}&\\ 
&&& &&
\end{array}
\right), 
\end{eqnarray*}
where $E_{k-j+1}$ is a $(k-j+1)\times (k-j+1)$ unit matrix. 
Thus, we obtain 
\begin{eqnarray*}
{}&{}&\left(H_3\circ H_1\circ L_{(p_0,\ldots,p_k)}\circ H_2\right)
\left(x_0,x_1,\ldots,x_n\right)\\
{}&=&
\left(x_0, x_1, \ldots , x_n\right)
\left(
\begin{array}{ccccccc}
\alpha _{0,1}&\cdots &\alpha _{0,j}&p_{j+1,0}-p_{00}&\cdots  &p_{k,0}-p_{00}&-x_0\\
\alpha _{1,1}&\cdots  &\alpha _{1,j}&p_{j+1,1}-p_{01}&\cdots  &p_{k,1}-p_{01}& x_1\\
\vdots             & \ddots &  \vdots &\vdots & &\vdots & \vdots  \\
\alpha _{n,1}&\cdots &\alpha _{n.j} &p_{j+1,n}-p_{0n}&\cdots  &p_{k,n}-p_{0n}& x_{n}\\
\end{array}
\right). 
\end{eqnarray*}
Set ${\bf a}_i=(\alpha_{0,i},\alpha_{1,i},\ldots,\alpha_{n,i})$ $(1\leq i\leq j)$. 
Since $V(p_0,\ldots,p_k)=\sum_{i=1}^j\mathbb{R}{\bf a}_i$, 
there exists a $k\times k$ regular matrix $\widetilde{B}$ such that 
\begin{eqnarray*}
\left(
\begin{array}{ccccccc}
\alpha _{0,1}&\cdots &\alpha _{0,j}&p_{j+1,0}-p_{00}&\cdots  &p_{k,0}-p_{00}\\
\alpha _{1,1}&\cdots  &\alpha _{1,j}&p_{j+1,1}-p_{01}&\cdots  &p_{k,1}-p_{01} \\
\vdots             & \ddots &  \vdots &\vdots & &\vdots   \\
\alpha _{n,1}&\cdots &\alpha _{n.j} &p_{j+1,n}-p_{0n}&\cdots  &p_{k,n}-p_{0n}\\
\end{array}
\right)\widetilde{B} \\
=
\left(
\begin{array}{ccccccc}
\alpha _{0,1}&\cdots &\alpha _{0,j}&0&\cdots  &0\\
\alpha _{1,1}&\cdots  &\alpha _{1,j}&\vdots & &\vdots \\
\vdots             & \ddots &  \vdots &\vdots & & \vdots  \\
\alpha _{n,1}&\cdots &\alpha _{n.j} &0&\cdots  &0\\
\end{array}\right).
\end{eqnarray*}
By composing $H_3\circ H_1\circ L_{(p_0,\ldots,p_k)}\circ H_2$ and 
the linear isomorphism of the target defined by 
\begin{eqnarray*}
(X_0,X_1,\ldots,X_k)\mapsto 
\left( X_0, X_1, \ldots , X_k\right)
\left(
\begin{array}{ccc|c}
&&&  0\\ 
&\mbox{\smash{\Large $\widetilde{B}$}}&&\vdots\\
&&&0\\ \hline
0&\cdots &0&1
\end{array}
\right),
\end{eqnarray*}
we may assume that
\begin{eqnarray*}
{}&{}&\left(H_3\circ H_1\circ L_{(p_0,\ldots,p_k)}\circ H_2\right)
\left(x_0,x_1,\ldots,x_n\right)\\
{}&=&
\left(x_0, x_1, \ldots , x_n\right)
\left(
\begin{array}{ccccccc}
\alpha _{0,1}&\cdots &\alpha _{0,j}&0&\cdots  &0&-x_0\\
\alpha _{1,1}&\cdots  &\alpha _{1,j}&\vdots & &\vdots &x_1 \\
\vdots             & \ddots &  \vdots &\vdots & & \vdots &\vdots \\
\alpha _{n,1}&\cdots &\alpha _{n.j} &0&\cdots  &0&x_0 \\
\end{array}\right).
\end{eqnarray*}
Set ${\bf \widetilde{a}}_i=(\alpha_{1,i},\ldots,\alpha_{n,i})$ $(1\leq i\leq j)$. 
We can choose ${\bf \widetilde{a}}_{j+1},\ldots ,{\bf \widetilde{a}}_n$ so that 
the set $\{{\bf \widetilde{a}}_1,\ldots ,{\bf \widetilde{a}}_n\}$ is an 
orthonormal basis of $\mathbb{R}^n$. 
Put $C=({}^t{\bf \widetilde{a}}_1,\ldots ,{}^t{\bf \widetilde{a}}_n)$. 
By $H_4$ appeared in the proof of $(1)$ of Theorem \ref{theorem 1}, 
we obtain 
\begin{eqnarray*}
{}&{}&(H_3\circ H_1\circ L_{(p_0,\ldots,p_k)}\circ H_2\circ H_4)(x_0,x_1,\ldots,x_n)\\
{}&=&\left(\alpha_{0,1}x_0+x_1,\ldots,\alpha _{0,j}x_0+x_j, 0,\ldots,0,-x_0^2
+\sum_{i=1}^nx_i^2\right).
\end{eqnarray*}
The rest of this proof is the same as the strategy of $(1)$ of 
Theorem \ref{theorem 1}.

Next, the case that $V(p_0,\ldots,p_k)\cap T=T$ is proved. 
For the $(n+1)\times j$ matrix $A$ appeared in the proof of the generic case 
$(1)$ of Theorem \ref{not in general position}, 
since the rank of $A$ is $j$ and $V(p_0,\ldots,p_j)\cap T=T$, 
there exists a $j\times j$ regular matrix $B$ such that 
the set of column vectors of $AB$ is the subset 
of an orthonormal basis of $\mathbb{R}^{n+1}$ and 
the matrix $AB$ has the following form:
\begin{eqnarray*}
AB=\begin{pmatrix}
1&0&\cdots &0\\
0&    \alpha _{1,2}  & \cdots &\alpha _{1,j}\\
\vdots & \vdots           & \ddots    &  \vdots  \\
0 &   \alpha _{n,2}  & \cdots &\alpha _{n,j}\\
\end{pmatrix}.
\end{eqnarray*}
The rest of this proof is clear by seeing the proof of 
the generic case of $(1)$ of Theorem \ref{not in general position} and 
the proof of \ref{special case} of $(1)$ of Theorem \ref{theorem 1}.

The proof of $(2)$ of Theorem \ref{not in general position} 
is the same as the proof of $(1)$ of Theorem \ref{not in general position}. 

Finally, the case $(3)$ is proved.    
By composing 
$L_{(p_0,\ldots,p_k)}$, 
the linear isomorphism 
of the target defined by 
$(X_0,X_1,\ldots,X_k)\mapsto (X_0,X_1-X_0,\ldots,X_k-X_0)$ 
and 
the linear isomorphism 
of the source defined by 
$(x_0,x_1,\ldots,x_n)\mapsto (x_0+p_{00},x_1+p_{01},\ldots,x_n+p_{0n})$, 
the desired mapping is obtained.
\end{proof}
\section*{Acknowledgements}
The authors thank G. Ishikawa, S. Izumiya, T. Ohmoto, M. A. S. Ruas, 
O. Saeki, K. Saji and F. Tari for helpful correspondences. 



\begin{thebibliography}{99}
\bibitem{ratcliffe}J.~G.~Ratcliffe, 
Foundations of hyperbolic manifolds, 
Graduate Texts in Mathematics, {\bf 149}, Springer-Verlag, New York, 1994. 
\bibitem{izumiyakossowskipeicarmen}S.~Izumiya, M.~Kossowski, D.~Pei and 
M.~C.~Romero Fuster, 
\textit{Singularities of lightlike hypersurfaces in Minkowski four-space}, 
Tohoku Math. J., {\bf 58}(2006), 71--88.   
\bibitem{izumiyanunocarmen}S.~Izumiya, J.~J.~Nuno Ballesteros and 
M.~C.~Romero Fuster, 
\textit{Global properties of codimension two spacelike submanifolds in Minkowski space}, 
Adv. Geom., 
{\bf 10}(2010), 51-75. 
\bibitem{izumiyapeisano}S.~Izumiya, D.~Pei and T.~Sano, 
\textit{Singularities of hyperbolic Gauss maps}, Proc. London Math. Soc., 
{\bf 86}(2003), 485--512.   
\bibitem{izumiyasaji}S.~Izumiya and K.~Saji, 
\textit{The mandala of Legendrian dualities for pseudo-spheres in
Lorentz-Minkowski space and spacelike surfaces}, J. Singul.,  
{\bf 2}(2010), 92-127.  
\bibitem{izumiyasato}S.~Izumiya and T.~Sato, 
\textit{Lightlike hypersurfaces along spacelike submanifolds in Minkowski space-time}, 
J.~Geom.~Phys., {\bf 71}(2013), 30-52.
\bibitem{izumiyatakahashitari}S.~Izumiya, M.~Takahashi and F.~Tari, 
\textit{Folding maps on spacelike and timelike surfaces and duality}, 
Osaka J. Math., {\bf 47}(2010), 839--862. 
\bibitem{izumiyatari1}S.~Izumiya and F.~Tari, 
\textit{Projections of hypersurfaces in the hyperbolic space to hyperhorospheres and hyperplanes}, 
Rev. Mat. Iberoam., {\bf 24}(2008), 895--920.   
\bibitem{izumiyatari2}S.~Izumiya and F.~Tari, 
\textit{Apparent contours in Minkowski 3-space and first order ordinary differential equations}, 
Nonlinearlity, {\bf 26}(2013), 911--932.
\bibitem{izumiyahandan}S.~Izumiya and H.~Y\i ld\i r\i m, 
\textit{Extensions of the mandala of Legendrian dualities for pseudo-spheres in Lorentz  Minkowski space}, 
Topology Appl.,  
{\bf 159}(2012), 509--518. 
\bibitem{kasedou}M.~Kasedou, 
\textit{Spacelike submanifolds of codimension two in de Sitter space}, 
J. Geom. Phys., {\bf 60}(2010), 31--42.    
\bibitem{sato}T.~Sato, 
\textit{Pseudo-spherical evolutes of curves on a spacelike surface in three dimensional Lorentz Minkowski space}, 
J. Geom.,  
{\bf 103}(2012), 319--331. 
\bibitem{tari1}F.~Tari, 
\textit{Caustics of surfaces in the Minkowski 3-space}, 
Q. J. Math., {\bf 63}(2012), 189--209. 
\bibitem{tari2}F.~Tari, 
\textit{Umbilics of surfaces in the Minkowski 3-space}, 
J. Math. Soc. Japan, {\bf 65}(2013), 723-731.   
\bibitem{brucegiblin}J.~W.~Bruce and P.~J.~Giblin, 
{\rm Curves and Singularities (second edition)}, 
Cambridge University Press, Cambridge, 1992.
\bibitem{ichikinishimura}S.~Ichiki and T.~Nishimura, 
\textit{Distance-squared mappings}, 
Topology Appl.,  
{\bf 160}(2013), 1005--1016.     
\end{thebibliography}
\end{document}